\newcommand{\M}{\mathcal{M}}
\newcommand{\sM}{\mathcal{M}}
\newcommand{\R}{\mathbb{R}}
\newcommand{\Z}{\mathbb{Z}}
\newcommand{\abs}[1]{\lvert #1 \rvert}
\newcommand{\eps}{\varepsilon}
\def\length{\mathop{\rm length}}
\def\sys{\mathop{\rm sys}}
\def\area{\mathop{\rm Area}}
\def\arcsinh{\mathop{\rm arcsinh}}
\def\Vol{\mathop{\rm Vol}}
\def\dist{\mathop{\rm dist}}
\def\gir{\mathop{\rm girth}}
\theoremstyle{plain}
\newtheorem{theorem}{Theorem}
\newtheorem{proposition}[theorem]{Proposition}
\newtheorem{lemma}[theorem]{Lemma}
\newtheorem{remark}[theorem]{Remark}
\newtheorem{conjecture}[theorem]{Conjecture}
\newcommand{\be}{\begin{equation}}
\newcommand{\ene}{\end{equation}}
\newcommand{\br}{\begin{remark}}
\newcommand{\er}{\end{remark}}
\newcommand{\bl}{\begin{lem}}
\newcommand{\el}{\end{lem}}
\newcommand{\bcor}{\begin{cor}}
\newcommand{\ecor}{\end{cor}}
\newcommand{\bpro}{\begin{pro}}
\newcommand{\epro}{\end{pro}}
\newcommand{\ben}{\begin{enumerate}}
\newcommand{\een}{\end{enumerate}}
\newcommand{\bp}{\begin{proof}}
\newcommand{\ep}{\end{proof}}
\newcommand{\bpo}{\begin{pro}}
\newcommand{\epo}{\end{pro}}
\newcommand{\beq}{\begin{equation*}}
\newcommand{\eeq}{\end{equation*}}
\newcommand{\bear}{\begin{eqnarray}}
\newcommand{\eear}{\end{eqnarray}}
\newcommand{\beqar}{\begin{eqnarray*}}
\newcommand{\eeqar}{\end{eqnarray*}}
\newcommand{\bt}{\begin{theorem}}
\newcommand{\et}{\end{theorem}}
\newcommand{\bex}{\begin{excer}}
\newcommand{\eex}{\end{excer}}
\theoremstyle{definition}
\theoremstyle{remark}
\newtheorem{rem}[theorem]{Remark}
\newtheorem*{rem*}{Remark}
\newtheorem*{pro*}{Proposition}
\newcommand{\RS}{Riemann surface}
\begin{document}

\title{Small eigenvalues of closed Riemann surfaces for large genus }

\author{Yunhui Wu}
\address[Y. ~W. ]{Tsinghua University, Haidian District, Beijing 100084, China}
\email{yunhui\_wu@mail.tsinghua.edu.cn}

\author{Yuhao Xue}
\address[Y. ~X. ]{Tsinghua University, Haidian District, Beijing 100084, China}
\email{xueyh18@mails.tsinghua.edu.cn}


\begin{abstract}
In this article we study the asymptotic behavior of small eigenvalues of hyperbolic surfaces for large genus. We show that for any positive integer $k$, as the genus $g$ goes to infinity, the minimum of $k$-th eigenvalues of hyperbolic surfaces over any thick part of moduli space of \RS s of genus $g$ is uniformly comparable to $\frac{1}{g^2}$ in $g$. And the minimum of $ag$-th eigenvalues of hyperbolic surfaces in any thick part of moduli space is bounded above by a uniform constant only depending on $\eps$ and $a$.

In the proof of the upper bound, for any constant $\eps>0$, we will construct a closed hyperbolic surface of genus $g$ in any $\eps$-thick part of moduli space such that it admits a pants decomposition whose curves all have length equal to $\eps$, and the number of separating systole curves in this surface is uniformly comparable to $g$.
\end{abstract}

\maketitle

\section{\textbf{Introduction}}
For a closed \RS \ $X_g$ of genus $g\geq 2$, the complex structure uniquely determines a complete hyperbolic metric on $X_g$. The spectrum of the Laplacian on $X_g$ given by the hyperbolic metric has been a fascinating topic in a number of mathematical fields for a long time. It is well-known that the spectrum of $X_g$ is a discrete subset in $\R^{\geq 0}$ and consists of eigenvalues with finite multiplicity. We enumerate them, counted with multiplicity, in the following increasing order
\[0=\lambda_0(X_g)<\lambda_1(X_g)\leq \lambda_2(X_g) \leq \cdots.\]

Buser \cite{Bus77} showed that for any constant $\eps>0$, there exists a hyperbolic surface $X_g$ of genus $g \geq 2$ such that $\lambda_{2g-3}(X_g)<\eps$.  Schoen, Wolpert and Yau \cite{SWY80} showed that for any integer $i \in [1, 2g-3]$, the $i$-th eigenvalue $\lambda_i(X_g)$ is comparable to a quantity $L_i$ on $X_g$. Here $L_i$ is the minimal possible sum of the lengths of simple closed geodesics in $X_g$ which cut $X_g$ into $i+1$ pieces. Clearly the quantity $L_i\ (1\leq i \leq (2g-3))$ can be arbitrary close to $0$ for certain hyperbolic surfaces. And they \cite{SWY80} also showed that $\lambda_{2g-2}(X_g)>c(g)$ where $c(g)$ is a positive constant only depending on $g$. Then Buser \cite[Theorem 8.1.4]{Bus92} showed that the constant $c(g)$ can be chosen to be uniform (independent of $g$).  Otal and Rosas \cite{OR09} showed that for any analytic Riemannian metric on $X_g$ with curvature $\leq -1$, then the constant $c(g)$ above can be exactly chosen to be $\frac{1}{4}$. And it is known that a complete hyperbolic metric is always analytic. Ballmann, Matthiesen and Mondal \cite{BMM16} showed that the assumption on the real analyticity of the metric in \cite{OR09} can be removed.

Let $\sM_g$ be the moduli space of \RS s of genus $g$. Fix any positive integer $k$, as introduced above, by Schoen-Wolpert-Yau \cite{SWY80} the $k$-th eigenvalue $\lambda_{k}$ can be arbitrarily closed to $0$ near certain boundary of $\sM_g$ for large enough $g$. For any $X=X_g \in \sM_g$, let $h(X)$ be the Cheeger constant of $X \in \sM_g$ (one may see \cite{Che70} or Section \ref{sec-low} for the definition). Mirzakhani \cite[Theorem 4.8]{Mir13} showed that the following limit probability holds:
\[\lim_{g\to \infty} \frac{\Vol_{wp}(\{X\in \sM_g; \ h(X) \geq \frac{\log 2}{2\pi+\log 2} \})}{\Vol_{wp}(\sM_g)}=1\]
where $\Vol_{wp}$ is the Weil-Petersson measure induced from the classical Weil-Petersson metric on $\sM_g$.

Recall that the Cheeger inequality \cite{Che70} tells that the first eigenvalue $\lambda_1(X)\geq \frac{1}{4} h^2(X)$. Hence, Mirzakhani's equation above gives that in a probabilistic sense the first eigenvalue of hyperbolic surfaces is uniform positive as $g \to \infty$. More precisely,
\[\lim_{g\to \infty} \frac{\Vol_{wp}(\{X\in \sM_g; \ \lambda_1(X) \geq \frac{1}{4}\cdot (\frac{\log 2}{2\pi+\log 2})^2 \sim 0.00247 \})}{\Vol_{wp}(\sM_g)}=1.\]
\noindent Recently in \cite{WX21} we improve the above constant $0.00247$ to be $\frac{3}{16}-\eps$ for any $\eps>0$. Which is also independently obtained on later by Lipnowski and Wright in \cite{LW21}.

We refer to the length of a shortest closed geodesic in $X$ as the \emph{systole} of $X$ and denote it by $\sys(X)$.  The systole function $\sys(\cdot):\sM_g \to \R^+$ is continuous, but not smooth as corners appear when it is realized by multiple essential isotropy classes of simple closed curves. Many results on the geometry of moduli space $\sM_g$ especially for large genus $g$ can be stated in terms of the systole function. One may see \cite{BB16, CP12, FKM13, Mirz10, RT13, W-inradius} for recent related topics.

For any positive constant $\eps$, let $\sM_g^{\geq \eps}$ be the $\eps$-thick part of the moduli space $\sM_g$. That is,
\[\sM_g^{\geq \eps}:=\{X \in \sM_g; \ \sys(X) \geq \eps\}.\]

It is known that the set $\sM_g^{\geq \eps}$ is compact for all $\eps>0$, which is due to Mumford \cite{Mumford71}. And it is not hard to see that $\sM_g^{\geq \eps}$ is always nonempty for large enough $g$. Actually Buser-Sarnak \cite{BS94} showed that $\max_{X \in \sM_g}\sys(X)$ is uniformly comparable to $\log{g}$ as $g$ goes to infinity.

For any fixed positive integer $k$, the $k$-th eigenvalue
\[\lambda_k(\cdot): \sM_g \to \R^+\]
defines a continuous and bounded function \cite{Bus92}. As introduced above we know that for large enough $g$,
\[\inf_{X\in \sM_g}  \lambda_k(X) =0.\]

Restricted on the thick part, there exists a constant $c(g,k,\epsilon)>0$, depending on $g,k$ and $\eps$, such that
\[\min_{X\in \sM_g^{\geq \eps}}  \lambda_k(X) \geq c(g, k,\epsilon)>0.\]

In this paper, our main goal is to study the asymptotic behavior of this quantity $\min\limits_{X\in \sM_g^{\geq \eps}}  \lambda_k(X)$ as $g \to \infty$.\\

\noindent \textbf{Notation.} In this paper, for two positive functions $f_1$ and $f_2$ of genus $g$ we say
$$f_1 \succ_g f_2 $$
if there exists a constant $C>0$, independent of $g$, such that
\[\liminf_{g\to \infty}\frac{f_1}{f_2} \geq C.\]
Similarly, we say $$f_1 \prec_g f_2$$ if
\[\limsup_{g\to \infty}\frac{f_1}{f_2} \leq C.\]

We say
$$f_1 \asymp_g f_2$$
if \[f_1 \succ_g f_2 \quad \text{and} \quad f_1 \prec_g f_2.\]
\

Now we are ready to state our results.
\begin{theorem}\label{mt-1}
For any integer $k\geq 1$ and constant $\eps >0$, we have
$$\min_{X\in \M_g^{\geq \eps}} \lambda_k(X) \asymp_g \frac{1}{g^2}.$$
More precisely, there exists a positive constant $\beta(\eps)$, only depending on $\eps$, such that for $g$ large enough,
$$\frac{\eps^2}{16 \pi^2}\cdot \frac{1}{ g^2} \leq \min_{X\in \M_g^{\geq \eps}} \lambda_k(X) \leq (\beta(\eps) \cdot k^2)\cdot \frac{1}{g^2}.$$
Moreover, if the constant $\eps$ satisfies that $0<\eps < 2 \ln(1+\sqrt{2})$, the constant $\beta (\eps)$ above can be chosen to be $c\cdot \eps$ where $c>0$ is a universal constant independent of $\eps$.
\end{theorem}

\begin{rem*}\ben
\item We remark here that Schoen in \cite{Schoen82} showed that for any closed $n$-dimensional $(n\geq 3)$ hyperbolic manifold $M$, $\lambda_1(M)\geq \frac{c_n}{\Vol(M)^2}$ where $c_n>0$ is a constant only depending on $n$. Moreover, the dependence on $\frac{1}{\Vol(M)^2}$ is optimal by Buser \cite{Buser80}.

\item Recently we show in \cite{WX21-eig-1} that for any $X\in \sM_g$ there exists a uniform constant $c>0$ such that $\lambda_1(X)\geq c\cdot\frac{L_1(X)}{g^2}$ where $L_1(X)$ is the minimal possible sum of the lengths of simple closed geodesics in $X$ which cut $X$ into $2$ pieces.

\item From Mantuano \cite{Man04} it is known that $\lambda_k(X)$ is comparable to the $k$-eigenvalue of a discretization of $X$. However, it is not clear to detect the information on the constants depending on $\eps$. In the proof of Theorem \ref{mt-1}, we will use explicit test functions.
\een
\end{rem*}

It is known \cite{BMM16, OR09} that $\lambda_{2g-2}(X_g)>\frac{1}{4}$ for any hyperbolic surface $X_g$. We prove the following uniform upper bound.
\begin{theorem}\label{mt-large}
For any constant $\eps >0$ and $a>0$, we have
$$\min_{X\in \M_g^{\geq \eps}} \lambda_{ag}(X) \leq \frac{1}{4}+a^2\cdot \theta(\eps)$$
where $\theta(\eps)>0$ is a constant only depending on $\eps$ satisfying $\theta(\eps) \to 0 $ as $\eps \to 0$.
\end{theorem}
\begin{rem*}
\ben
\item We are grateful to an anonymous referee for telling us the current statement and its proof on Theorem \ref{mt-large}, where our previous statement is only for $ag=6g-7$.

\item The constant $\eps>0$ in Theorem \ref{mt-1} and \ref{mt-large} could be arbitrarily large.
\een
\end{rem*}

The largest parts of our proofs of Theorem \ref{mt-1} and \ref{mt-large} are to show that they are optimal as $g\to \infty$. We will construct a hyperbolic surface $\mathcal{X}_g \in \M_g^{\geq \eps}$ and show that both the upper bounds in Theorem \ref{mt-1} and \ref{mt-large} will be realized on this hyperbolic surface. The existence of such a hyperbolic surface \ is independently interesting. We formulate it as follows. A curve $\gamma$ is said to be \emph{separating} on $X$ if the complement $X \setminus \gamma$ has at least two components. A pants decomposition $P$ is a collection of $(3g-3)$ disjoint simple closed curves that divide the surface into $(2g-2)$ pants.

\begin{proposition} \label{mt-2}
For any constant $\eps>0$, there exists an integer $g(\eps)>0$, depending on $\eps$, such that for all $g\geq g(\eps)$, there exists a hyperbolic surface $\mathcal{X}_g $ of genus $g$ satisfying that

$(a)$. $\mathcal{X}_g \in \sM_g^{\geq \eps}$.

$(b)$. There exists a pants decomposition $P$ of $\mathcal{X}_g$ such that for every curve $\gamma\in P$, the length $\ell(\gamma)$ of $\gamma$ is equal to $\eps$.

$(c)$. The number $\#\{\gamma \in P; \ \text{$\gamma$ is separating on $\mathcal{X}_g$}\}\asymp_g g$.
\end{proposition}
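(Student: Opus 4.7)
The plan is to construct $\mathcal{X}_g$ by prescribing its dual pants graph and then fixing a hyperbolic structure in Fenchel--Nielsen coordinates with all cuff lengths equal to $\eps$.

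A pants decomposition of a closed genus $g$ surface corresponds to a trivalent multigraph $G$ on $2g-2$ vertices and $3g-3$ edges (loops allowed), and a cuff is separating if and only if the corresponding edge is a bridge of $G$. To force $\Theta(g)$ separating cuffs I would take $G$ to be an unrooted trivalent tree with $g$ leaves and $g-2$ internal vertices (hence $2g-3$ tree edges), together with a self-loop attached at every leaf. The result is a trivalent multigraph with $2g-2$ vertices, $3g-3$ edges, and every one of the $2g-3$ tree edges is a bridge, which gives $2g-3\asymp_g g$ separating cuffs once the metric is in place; this is condition $(c)$. Assigning to each vertex a hyperbolic pair of pants with all three cuff lengths equal to $\eps$, and gluing along edges of $G$ with a twist parameter $\tau$ still to be chosen, produces a closed hyperbolic surface $\mathcal{X}_g = \mathcal{X}_g(\tau)$ of genus $g$ and makes $(b)$ immediate.

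The heart of the proof is $(a)$, i.e.\ $\sys(\mathcal{X}_g) \geq \eps$. Since every cuff is already a closed geodesic of length $\eps$, this amounts to ruling out any non-cuff closed geodesic shorter than $\eps$. Such a geodesic $\gamma$ must cross some cuff $\alpha$ transversally, hence traverse a standard embedded collar about $\alpha$; by the collar lemma this collar has half-width
\[ w(\eps) \;=\; \arcsinh\!\bigl(1/\sinh(\eps/2)\bigr), \]
so $\length(\gamma) \geq 2w(\eps)$. A direct computation shows $2w(\eps) \geq \eps$ precisely when $\eps \leq 2\arcsinh(1)$, which yields $(a)$ immediately in that range, with no constraint on the twist $\tau$.

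The main obstacle is the range $\eps > 2\arcsinh(1)$, where $2w(\eps) < \eps$ and in fact the common-perpendicular distance between two cuffs in a symmetric pants,
\[ d(\eps) \;=\; \arccosh\!\left(\frac{\cosh(\eps/2)}{\cosh(\eps/2)-1}\right), \]
can be much smaller than $\eps$; a zero-twist gluing would then produce closed geodesics of length about $2d(\eps)\ll\eps$. To kill these one writes the length $L_\gamma(\tau)$ of each cross-cuff simple closed geodesic class $\gamma$ as an explicit function of $\tau$ via the Fenchel--Nielsen trace formula for two pants glued along a common cuff, and then selects a single uniform twist value (say $\tau = \eps/2$) so that $L_\gamma(\tau) \geq \eps$ for every such class. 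Because the relevant short-curve classes are supported in bounded neighbourhoods of single pants, there are only finitely many local combinatorial types to control, so one uniform choice of $\tau$ should suffice. I expect this twist-tuning to be the technically heaviest piece of the argument; conditions $(b)$ and $(c)$ are built into the construction.
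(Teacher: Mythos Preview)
Your twist-tuning strategy cannot succeed with the tree-plus-self-loops graph once $\eps$ is large, so the proposal has a genuine gap in part $(a)$. Each self-loop produces a one-holed torus $T$ whose boundary and whose interior cuff $\alpha$ both have length $\eps$, and such a $T$ has systole strictly below $\eps$ for \emph{every} twist. To see this, set $a=2\cosh(\eps/2)$ and let $A,B$ generate $\pi_1(T)$ with $A$ representing $\alpha$. The traces $t_n=\mathrm{tr}(A^nB)$ obey the recursion $t_{n+1}=at_n-t_{n-1}$, so $t_n=C_+\lambda_+^n+C_-\lambda_-^n$ where $\lambda_\pm$ are the roots of $x^2-ax+1=0$; the Fricke trace identity for the commutator (which has trace $-a$ here) forces $C_+C_-=(a-1)/(a-2)$, \emph{independently of the twist}. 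Since the integer powers $\lambda_+^n$ are spaced multiplicatively by $\lambda_+$, one always has
\[
\min_{n\in\mathbb Z} t_n \;\le\; \sqrt{C_+C_-}\,\bigl(\lambda_+^{1/2}+\lambda_+^{-1/2}\bigr)\;=\;\sqrt{\tfrac{(a-1)(a+2)}{a-2}} \;\approx\; \sqrt a,
\]
so some simple closed curve $A^{n_0}B$ has length $\approx\eps/2<\eps$ whenever $\eps$ is large (concretely, once $a\gtrsim 3.1$, i.e.\ $\eps\gtrsim 2$). Condition $(a)$ therefore fails. The error in your heuristic is precisely the finiteness claim: the infinite Dehn-twist orbit $\{A^nB\}_{n\in\mathbb Z}$ always contains a member of length $\lesssim\eps/2$, and no choice of $\tau$ can push all of them above $\eps$ simultaneously.

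The paper sidesteps twists entirely by raising the \emph{girth} of the dual graph rather than using self-loops. One fixes $W=W(\eps)$ with $W\cdot d(\eps)\ge 2\eps$; by the results of Bollob\'as and Wormald there exist trivalent graphs on $V_0=V_0(\eps)$ vertices with girth $\ge W$. Chaining $\asymp g$ copies of such a block by single edges produces the graph $G_g$; those connecting edges are the bridges and give $(c)$. For $(a)$, a non-cuff closed geodesic projects to a closed walk $\Gamma$ in $G_g$. If $\Gamma$ is homotopically nontrivial it contains a simple cycle of length $\ge W$, so the geodesic crosses $\ge W$ pants and has length $\ge W\cdot d(\eps)\ge 2\eps$. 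If $\Gamma$ is null-homotopic it must backtrack at two distinct pants, and each backtrack contributes an essential arc in $\mathcal P_\eps$ from a cuff back to itself, of length $2\arcsinh\bigl(\cosh(\eps/2)/\sinh(\eps/4)\bigr)>\eps/2$; total $>\eps$. Hence $\sys(\mathcal X_g)=\eps$, with all $3g-3$ twist parameters left completely free.
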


For small enough constant $\eps>0$, by using the Collar Lemma it is not hard to construct such a surface. Petri \cite[Proposition 6.2]{Petri18} constructed a closed surface satisfying (b) and all other simple curves that are not part of the given pants decomposition are ``long enough". One may also see \cite{BRafi18} for related constructions (in more complicated settings). Here $\eps>0$ can be arbitrarily large. The new insight of Proposition \ref{mt-2} is that the number of separating systolic curves on $\mathcal{X}_g$ is uniformly comparable to $g$. By construction in Section \ref{sec-ex}, the twist parameters on the curves in $P$ can be chosen to be arbitrary. So actually we construct a subset in $\sM_g^{\geq \eps}$ of dimension $(3g-3)$ consisting of closed hyperbolic surfaces of genus $g$ satisfying Proposition \ref{mt-2}.

\begin{rem*}
It is interesting to study the asymptotic behavior of the quantity $\sup \limits_{X\in \sM_g}  \lambda_k(X)$ as $g \to \infty$. One may see \cite{BM01,Bus84, BBD88,Mon15} for related details. It is known that $\limsup\limits_{g\to \infty}\sup\limits_{X\in \sM_g}  \lambda_1(X)\leq \frac{1}{4}$. One interesting conjecture is as following which is related to the famous Selberg conjecture \cite{Sarnak-95}.
\end{rem*}

\begin{conjecture}[\cite{BM01}]
 $\lim\limits_{g\to \infty}\sup\limits_{X\in \sM_g}  \lambda_1(X)=\frac{1}{4}$. 
\end{conjecture}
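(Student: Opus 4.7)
The content of the conjecture splits naturally into two inequalities. The upper bound $\limsup_{g\to\infty}\sup_{X\in\sM_g}\lambda_1(X)\leq \tfrac14$ is already recorded in the remark preceding the conjecture and follows, say, from Cheng's diameter comparison together with the fact that any $X_g\in\sM_g$ has area $4\pi(g-1)$ and hence diameter bounded below by $c\log g$: the Dirichlet spectrum of a hyperbolic disk of radius $R$ tends to $\tfrac14$, the bottom of the spectrum of the universal cover $\H^2$, and min-max then caps $\lambda_1(X_g)$ by $\tfrac14+O(1/\log^2 g)$. What is open, therefore, is the matching lower bound: for every $\eps>0$ one must exhibit a sequence $X_g\in\sM_g$ with $\lambda_1(X_g)\geq \tfrac14-\eps$ for all sufficiently large $g$.

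The first strategy I would pursue is probabilistic, in the spirit of Mirzakhani's Weil--Petersson random model already used in the introduction. One samples $X_g$ uniformly from $(\sM_g,\mathrm{Vol}_{wp})$ and controls small eigenvalues via the Selberg trace formula applied to a carefully chosen test function, following Mirzakhani--Petri, Anantharaman--Monk, and Lipnowski--Wright. The current state of the art produces $\lambda_1(X_g)\geq \tfrac{3}{16}-\eps$ asymptotically almost surely, so the task is to push the trace-side analysis from $\tfrac{3}{16}$ to $\tfrac14$; this requires sharp moment asymptotics, rather than mere one-sided bounds, for the number of short primitive closed geodesics on a random $X_g$, in analogy with Friedman's sharpening of the $\tfrac{3}{16}$-style bound for random $d$-regular graphs to the Ramanujan value $2\sqrt{d-1}/d$.

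An alternative, more classical route is arithmetic: along a tower of congruence covers of a fixed arithmetic surface, Jacquet--Langlands together with the Kim--Sarnak bound give $\lambda_1\geq \tfrac{975}{4096}$ unconditionally, and Selberg's eigenvalue conjecture would upgrade this to $\tfrac14$. A related construction would use Brooks-type discrete-to-continuous spectral comparison: glue a fixed hyperbolic pair of pants along the edges of a Ramanujan $d$-regular graph on $O(g)$ vertices and pull back the optimal graph spectral gap to the surface. In both cases a multiplicative constant is lost in the transfer and one stops short of $\tfrac14$.

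The main obstacle, accordingly, is closing this last multiplicative gap: every currently available method for producing high-genus surfaces with $\lambda_1$ bounded away from zero also introduces a structural inefficiency (short primitive geodesics in the random model, non-Ramanujan defect in the arithmetic or graph-theoretic model) that keeps the lower bound strictly below $\tfrac14$. My expectation is that genuine progress will first come from the random side, since the test-function methods underlying Friedman's theorem for graphs admit direct hyperbolic analogues, whereas the arithmetic route ultimately collapses into Selberg's eigenvalue conjecture itself.
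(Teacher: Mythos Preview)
The statement you are addressing is labeled a \emph{Conjecture} in the paper, and the paper offers no proof of it; it is presented precisely as an open problem, with only the upper bound $\limsup_{g\to\infty}\sup_{X\in\sM_g}\lambda_1(X)\le\tfrac14$ recorded as known. Your write-up correctly recognizes this: you do not claim a proof, you separate the known half from the open half, and you survey the natural lines of attack (Weil--Petersson random surfaces via trace-formula methods, arithmetic towers and Selberg's eigenvalue conjecture, Brooks-type transfer from Ramanujan graphs), together with the reason each currently falls short of $\tfrac14$.

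Since there is no proof in the paper to compare against, the only thing to flag is that what you have written is a research outline, not a proof; be explicit about that status so a reader does not mistake it for a claimed argument. Your outline is accurate for the state of knowledge at the time the paper was written. If you wish to update it, you might note that subsequent work (random covers of a fixed hyperbolic surface, after Magee--Naud--Puder and Hide--Magee) has in fact established the existence of genus-$g$ surfaces with $\lambda_1\to\tfrac14$, so the conjecture is now a theorem; but that lies outside the scope of the paper itself.
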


Recently this conjecture is confirmed by Hide and Magee in \cite{HM21}.

\subsection*{Plan of the paper.} Section \ref{np} will provide some necessary background and basic properties on hyperbolic surfaces. In Section \ref{sec-low} we will provide a proof for the lower bound of Theorem \ref{mt-1}. The construction of hyperbolic surface will be discussed in Section \ref{sec-ex}, that is to show Proposition \ref{mt-2}. In Section \ref{sec-upp} we will complete the proofs of Theorem \ref{mt-1} and \ref{mt-large}.

\subsection*{Acknowledgement}
The authors would like to greatly thank Hugo Parlier and Kasra Rafi for the invaluable discussions on Proposition \ref{mt-2}. We are also very grateful to an anonymous referee for his/her helpful comments and suggestions, especially for improving the statement of Theorem \ref{mt-2} and providing its proof, and also for pointing out the related works by Buser \cite{Buser80}, Erd\"os--Sachs \cite{ES63}, Schoen \cite{Schoen82}, and so on. Which makes the paper more clear. Both authors are supported by the NSFC grant No. $12171263$.

\section{Preliminaries} \label{np}
In this section we will set up the notations and provide some necessary background on two-dimensional hyperbolic geometry, regular graphs and spectrum theory of the Laplace operator.

\subsection{Hyperbolic surfaces} Let $X_g$ be a closed hyperbolic surface of genus $g\geq 2$ and $\gamma \subset X_g$ be a homotopically non-trivial loop. There always exists a unique closed geodesic, still denoted by $\gamma$, freely homotopic to this loop. The Collar Lemma says that it has a tubular neighborhood which is a topological cylinder with a standard hyperbolic metric. And the width of this cylinder, only depending on the length of $\gamma$, goes to infinity as the length of $\gamma$ goes to $0$. This was first observed by Keen in \cite{Kee74} and then improved by many people. We use the following version \cite[Theorem 4.1.1]{Bus92}.

\begin{lemma}[Collar lemma]\label{collar}
Let $\gamma_1 , \gamma_2, ..., \gamma_m$ be disjoint simple closed geodesics on a closed hyperbolic surface $X_g$, and $\ell(\gamma_i)$ be the length of $\gamma_i$. Then $m\leq 3g-3$ and we can define the collar of $\gamma_i$ by
$$K(\gamma_i)=\{x\in X_g; \ \dist(x,\gamma_i)\leq w(\gamma_i)\}$$
where
$$w(\gamma_i)=\mathop{\rm arcsinh} \frac{1}{\sinh \frac{1}{2}l(\gamma_i)}$$
is the half width of the collar.

Then the collars are pairwise disjoint for $i=1,...,m$. Each $K(\gamma_i)$ is isomorphic to a cylinder $(\rho,t)\in [-w(\gamma_i),w(\gamma_i)] \times \mathbb S ^1$, where $\mathbb S ^1 = \R / \Z$, with the metric
$$ds^2=d\rho^2 + \ell(\gamma_i)^2 \cosh^2\rho dt^2.$$
And for a point $(\rho,t)$, the point $(0,t)$ is its projection on the geodesic $\gamma_i$, $\abs{\rho}$ is the distance to $\gamma_i$, $t$ is the coordinate on $\gamma_i \cong \mathbb S ^1$.
\end{lemma}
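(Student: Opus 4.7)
The plan is to work in the universal cover $\H^2$, introduce Fermi coordinates to get the cylindrical metric model, and then show that the strips of the stated half-width project injectively and disjointly to $X_g$.

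First, I would lift each $\gamma_i$ to a geodesic line $\tilde\gamma_i\subset\H^2$, realized as the axis of a hyperbolic deck transformation $T_i$ of translation length $\ell(\gamma_i)$. Fermi coordinates $(\rho,s)$ based at $\tilde\gamma_i$ express the hyperbolic metric on a tubular neighborhood of $\tilde\gamma_i$ as $d\rho^2+\cosh^2\rho\,ds^2$, where $\rho$ is signed distance to $\tilde\gamma_i$ and $s$ is arclength along $\tilde\gamma_i$. Setting $s=\ell(\gamma_i)t$ and taking the quotient by $\langle T_i\rangle$ yields exactly the stated cylindrical model $d\rho^2+\ell(\gamma_i)^2\cosh^2\rho\,dt^2$ on a neighborhood of $\gamma_i$ in $X_g$; this gives the local metric assertion once we confirm the domain $|\rho|\le w(\gamma_i)$ lifts faithfully.

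The substantive content is then the injective embedding of each $K(\gamma_i)$ and the pairwise disjointness of the $K(\gamma_j)$. The strip $\{|\rho|\le w(\gamma_i)\}\subset\H^2$ fails to project injectively to $X_g$ exactly when some deck transformation $\phi\notin\langle T_i\rangle$ sends one point of the strip to another, which forces $\dist(\tilde\gamma_i,\phi(\tilde\gamma_i))\le 2w(\gamma_i)$. Since $\gamma_i$ is simple, distinct lifts of $\gamma_i$ are disjoint geodesic lines, and the same holds for $\tilde\gamma_i$ versus any lift of $\gamma_j$ with $j\ne i$. The sharp lower bound on the common perpendicular comes from assembling a right-angled (Lambert) quadrilateral whose sides are two successive common perpendiculars, related by $T_i$, together with the matching segments of $\tilde\gamma_i$ and $\phi(\tilde\gamma_i)$. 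Standard hyperbolic trigonometry in this quadrilateral yields the inequality
\[
\sinh\bigl(\tfrac12\dist(\tilde\gamma_i,\phi(\tilde\gamma_i))\bigr)\,\sinh\bigl(\ell(\gamma_i)/2\bigr)\ge 1,
\]
which says precisely $\dist(\tilde\gamma_i,\phi(\tilde\gamma_i))\ge 2w(\gamma_i)$ and rules out overlap. An analogous Lambert-quadrilateral comparison between a lift of $\gamma_i$ and a lift of $\gamma_j$ gives $\dist\ge w(\gamma_i)+w(\gamma_j)$, so the collars $K(\gamma_i)$ and $K(\gamma_j)$ are disjoint in $X_g$.

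The bound $m\le 3g-3$ is purely topological: cutting $X_g$ along $m$ disjoint essential simple closed geodesics produces components of negative Euler characteristic whose total equals $2-2g$, and each component has Euler characteristic at most $-1$, with the extreme case $m=3g-3$ realized by a pants decomposition. The main obstacle in the whole argument is pinning down the sharp constant $w(\gamma)=\arcsinh(1/\sinh(\ell/2))$: the embedding and disjointness arguments are qualitatively routine, but identifying the precise width demands choosing the correct Lambert quadrilateral and a careful application of the hyperbolic trigonometric identities for right-angled quadrilaterals.
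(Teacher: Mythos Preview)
The paper does not prove this lemma; it is quoted as background from Buser \cite[Theorem~4.1.1]{Bus92}, so there is no in-paper argument to compare your proposal against. Your outline follows one of the standard routes (Fermi coordinates for the metric model, then a distance estimate between disjoint lifts in $\H^2$), and the target inequality $\sinh(d/2)\sinh(\ell(\gamma_i)/2)\ge 1$ is exactly the sharp collar width.

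One caution on the key trigonometric step: the Lambert quadrilateral you describe, with sides ``two successive common perpendiculars, related by $T_i$, together with the matching segments of $\tilde\gamma_i$ and $\phi(\tilde\gamma_i)$,'' does not close up. The translate $T_i$ fixes $\tilde\gamma_i$ but moves $\phi(\tilde\gamma_i)$ to a \emph{different} lift $T_i\phi(\tilde\gamma_i)$ (commuting hyperbolic isometries share an axis), so the second perpendicular lands on the wrong geodesic and the four segments do not bound a quadrilateral. The standard derivations fix this either by first extending $\{\gamma_i\}$ to a pants decomposition and running the trirectangle computation inside a single pair of pants (this is Buser's route, and it handles the embedding of each half-collar and the pairwise disjointness simultaneously), or by building the correct trirectangle in $\H^2$ from the common perpendicular, half of one axis, and the perpendicular bisector between the two lifts. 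Your overall strategy is fine; only this step needs to be set up more carefully.
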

\noindent As the length $\ell(\gamma)$ of the central closed geodesic goes to $0$, the width $w$ tends to infinity; and if the length $\ell(\gamma)$ goes to infinity, the width $w$ tends to zero. In this paper, we mainly deal with the case that $\ell(\gamma)$ is large.


\subsection{Pants} \emph{A pair of pants} $\mathcal{P}$ is a compact hyperbolic surface of genus $0$ with $3$ totally geodesic boundary components, each homeomorphic to a circle. The hyperbolic metric is uniquely determined by the lengths of the three boundary closed geodesics.

For any constant $\eps>0$, let $\mathcal{P}_{\eps}$ be the pair of pants whose boundary curves all have length equal to $\eps$. Let $d$ be a shortest path between two different boundary curves. We denote its length also by $d$. The curve $d$ is perpendicular to the boundary curves. Let $\tau$ be a shortest path with both end points on a given boundary curve and assume that $\tau$ is not homotopic to a piece of the boundary curve (with end points fixed). Then this curve $\tau$ is clearly also perpendicular to the given boundary curve at both end points. We denote the length of $\tau$ also by $\tau$.
\begin{figure}[ht]
\begin{center}
\begin{tikzpicture}[scale=1]

\draw[rotate=60] (-1,2) ellipse [x radius=1, y radius=0.3];
\draw[rotate=-60] (1,2) ellipse [x radius=1, y radius=0.3];
\draw[rotate=0] (0,-3) ellipse [x radius=1, y radius=0.3];

\draw (195:2.828) ..controls (-1.3,-1.5) .. (-1,-3);
\draw (-15:2.828) ..controls (1.3,-1.5) .. (1,-3);
\draw (150:2) ..controls (0,0.3) .. (30:2);

\draw (0.2,-2.7) ..controls (0.2,-1).. (0,0.47);
\draw[dashed] (-0.2,-3.3) ..controls (-0.2,-1).. (0,0.47);

\draw (5:2.7) node {$\eps$};
\draw (175:2.7) node {$\eps$};
\draw (0,-3.5) node {$\eps$};

\draw (-45:2.3) node {$d$};
\draw (225:2.3) node {$d$};
\draw (0,0.8) node {$d$};

\draw (0.3,-1) node {$\tau$};

\end{tikzpicture}
\end{center}
\caption{}\label{pic2}
\end{figure}

The three geodesics $d$ divide the pants into two equal right-angled hexagons. Let $h$ be the shortest path between a pair of opposite sides in the right-angled hexagon. We denote the length of $h$ also by $h$. By symmetry, clearly $h$ intersects the both boundary curves at the midpoints. And
\[\tau=2h.\]
\begin{figure}[ht]
\begin{center}
\begin{tikzpicture}[scale=1]

\draw (195:2.828) ..controls (-2.1,0) .. (150:2);
\draw (-15:2.828) ..controls (2.1,0) .. (30:2);
\draw (-1,-3) ..controls (0,-2.8) .. (1,-3);

\draw (195:2.828) ..controls (-1.7,-1.7) .. (-1,-3);
\draw (-15:2.828) ..controls (1.7,-1.7) .. (1,-3);
\draw (150:2) ..controls (0,0.7) .. (30:2);

\draw (0,-2.85) -- (0,0.77);

\draw (5:2.4) node {$\eps/2$};
\draw (175:2.4) node {$\eps/2$};
\draw (0,-3.1) node {$\eps/2$};

\draw (-45:2.7) node {$d$};
\draw (225:2.7) node {$d$};
\draw (0,1) node {$d$};

\draw (0.2,-1) node {$h$};

\end{tikzpicture}
\end{center}
\caption{}\label{pic3}
\end{figure}

For right-angled pentagons and hexagons, by formulas in \cite[Page 454]{Bus92} we have
$$\cosh \frac{\eps}{2} = \sinh^2 {\frac{\eps}{2}} \cosh {d} - \cosh^2 \frac{\eps}{2}$$
and
$$\cosh \frac{\eps}{2} = \sinh h \sinh \frac{\eps}{4}.$$

Thus,
\bear \label{d-ex}
d=2\arcsinh \frac{1}{2\sinh \frac{\eps}{4}}
\eear
and
\bear \label{tau-ex}
\tau=2h=2\arcsinh \frac{\cosh \frac{\eps}{2}}{\sinh \frac{\eps}{4}}.
\eear

Thus, for large $\eps$, we have
\bear \label{d-in}
\lim_{\eps \to \infty} \frac{d}{2 e^{-\frac{\eps}{4}}}=1.
\eear

And we always have
\bear \label{tau-in}
\tau > \frac{1}{2}\eps.
\eear

\subsection{Trivalent graphs}\label{subs-tg} A \emph{trivalent graph} is a finite $3$-regular connected graph. That is, every vertex has three emanating edges. Every compact hyperbolic surface of genus $g \geq 2$ has a pants decomposition of $(2g-2)$ pairs of pants. If we associate each pair of pants to be a vertex, and an edge between two vertices if two pairs of pants have a common boundary curve, then this associates each compact hyperbolic surface of genus $g \geq 2$ to a trivalent graph with $(2g-2)$ vertices of $(3g-3)$ edges. Actually the opposite way is also true. One may see \cite[Section 3.5]{Bus92} for more details.

Recall that the \emph{girth} of a connected graph is the length of the shortest non-trivial loop in this graph where each edge has length equal to $1$. Actually more general, an $n$-regular graph is a graph such that all vertices have degree $n$. A trivalent graph is a $3$-regular graph. Let $U(n,E,w)$ be the number of unlabeled $n$-regular graphs with $E$ edges with girth at least $w$. It is well-known that for any fixed $n>0$ and arbitrarily large $w>0$, $U(n,E,w)>0$ for large enough $E$. For example one may see \cite{ES63} by Erd\"os--Sachs. In particular, for large enough $g$, there always exist trivalent graphs with edges $(3g-3)$ of arbitrary large girth. In Section \ref{sec-ex} we will use this fact to construct the hyperbolic surfaces in Proposition \ref{mt-2}.

Actually more precisely, by Bollob$\acute{\text{a}}$s \cite{Bol82} and Wormald \cite{Wor81} one may have
\[U(n,E,w) \sim_E \exp (-\sum_{i=1}^{w-1} \frac{(n-1)^i}{2i}) \frac{(2E)!}{2^E E! V! (n!)^V}\]
where $V = 2E/n$ is the number of vertices and $f_1 \sim_E f_2$ means $\lim_{E\to \infty}\frac{f_1(E)}{f_2(E)}=1$.


\subsection{Spectrum} Let $X_g$ be a closed \RS \ of genus $g\geq 2$ which corresponding to a hyperbolic metric on $X_g$. Let $\Delta$ be the Laplacian with respect to this metric. A number $\lambda$ is called an \emph{eigenvalue} if $\Delta f+ \lambda \cdot f=0$ on $X_g$. And the corresponding function $f$ is called an \emph{eigenfunction}. It is known that the set of eigenvalues is an infinite sequence of non-negative numbers
\[0=\lambda_0(X_g)<\lambda_1(X_g)\leq \lambda_2(X_g) \leq \cdots.\]
Let $\{f_i\}_{i \geq 0}$ be its corresponding orthonormal sequence of eigenfunctions. Clearly $f_0$ is the constant function $\frac{1}{4\pi(g-1)}$. The mini-max principle tells that for any integer $k\geq 0$,
\bear \label{lamb-k}
\lambda_k(X_g)=\inf_{}\{ \frac{\int_{X_g}|\nabla f|^2}{\int_{X_g}f^2}; \ f \in H^1(X_g) \ \text{and} \ \int_{X_g}f\cdot f_i=0 \ \forall i \in [0, k-1]\}
\eear
where $H^1(X_g)$ is the completion of the space of smooth functions on $X_g$. One may see \cite{Chavel} for details.


\section{Lower Bound}\label{sec-low}

In this section, we will use the Cheeger inequality in \cite{Che70} to give the lower bound in Theorem \ref{mt-1}. More precisely, we will show that
\begin{proposition} \label{low}
For any constant $\eps >0$, then for large enough $g$ we have,
\[\min_{X\in \M_g^{\geq \eps}} \lambda_1(X)\geq \frac{\eps^2}{16 \pi^2}\cdot \frac{1}{ g^2}.\]
\end{proposition}

First we recall the Cheeger inequality. Let $X_g$ be a closed hyperbolic surface of genus $g \ (g\geq 2)$ and $\Omega\subset X_g$ be a domain with smooth boundary. The \emph{first eigenvalue $\sigma_1(\Omega)$ of Neumann type} on $\Omega$ is defined as
$$\sigma_1(\Omega):=\inf_{\int_\Omega f=0} \frac{\int_\Omega \abs{\nabla f}^2 }{\int_\Omega f^2}$$
where $\nabla$ is the hyperbolic gradient in the sense of the hyperbolic metric corresponding to the complex structure of $X_g$.

The \emph{Cheeger isoperimetric constant} $h(\Omega)$ is defined as
$$h(\Omega):= \inf \frac{\length(\Gamma)}{\min \{\area(A_1),\area(A_2)\}}$$
where the infimum is taken over all smooth curves $\Gamma$ which divide $\Omega$ into two pieces $A_1$ and $A_2$.

\begin{lemma}[Cheeger inequality, \cite{Che70}]\label{Cheeger}
Then
$$\sigma_1(\Omega) \geq \frac{1}{4}h^2(\Omega).$$
\end{lemma}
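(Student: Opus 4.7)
The plan is to carry out Cheeger's classical argument: extract a well-chosen test function from the first Neumann eigenfunction of $\Omega$, apply the co-area formula to its square, and use the definition of $h(\Omega)$ to estimate the lengths of the level curves.

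First I would pick a first Neumann eigenfunction $f$ on $\Omega$, so that $\Delta f + \sigma_1(\Omega) f = 0$ with $\partial f/\partial n \equiv 0$ on $\partial\Omega$ and $\int_\Omega f = 0$. Set $\Omega^+ := \{f>0\}$ and $\Omega^- := \{f<0\}$; by swapping the sign of $f$ if necessary, I may assume $\area(\Omega^+) \leq \tfrac{1}{2}\area(\Omega)$. Put $\phi := f^+$, multiply the eigenvalue equation by $\phi$, and integrate by parts over $\Omega^+$. The boundary term vanishes because $\phi = 0$ on $\partial\Omega^+ \cap \Omega$ while $\partial f/\partial n = 0$ on $\partial\Omega$, yielding the Rayleigh identity
$$\sigma_1(\Omega) \;=\; \frac{\int_\Omega |\nabla \phi|^2}{\int_\Omega \phi^2}.$$

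Next, I would apply the co-area formula to $\phi^2$, noting that $|\nabla(\phi^2)| = 2\phi\,|\nabla \phi|$:
$$\int_\Omega 2\phi\,|\nabla\phi|\,dA \;=\; \int_0^\infty \length\bigl(\{\phi^2 = t\}\cap\Omega\bigr)\,dt.$$
For every $t > 0$, the superlevel set $\{\phi^2 > t\}$ lies in $\Omega^+$ and so has area at most $\tfrac{1}{2}\area(\Omega)$; hence the level curve $\{\phi^2 = t\}\cap\Omega$ is an admissible separating curve for the isoperimetric ratio defining $h(\Omega)$, which gives
$$\length\bigl(\{\phi^2 = t\}\cap\Omega\bigr) \;\geq\; h(\Omega)\cdot \area\bigl(\{\phi^2 > t\}\bigr).$$
Integrating in $t$ and invoking the layer-cake formula $\int_0^\infty \area(\{\phi^2 > t\})\,dt = \int_\Omega \phi^2$ produces
$$\int_\Omega 2\phi\,|\nabla\phi|\,dA \;\geq\; h(\Omega)\int_\Omega \phi^2.$$

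To conclude, I would bound the left-hand side above via Cauchy-Schwarz,
$$\int_\Omega 2\phi\,|\nabla\phi| \;\leq\; 2\Bigl(\int_\Omega \phi^2\Bigr)^{1/2}\Bigl(\int_\Omega |\nabla\phi|^2\Bigr)^{1/2},$$
and combine with the previous inequality; squaring and dividing by $\int_\Omega \phi^2$ then gives $\sigma_1(\Omega) \geq h(\Omega)^2/4$. There is no genuine obstacle here — the only technical points requiring care are the regularity of the level sets of $\phi$ for a.e.\ $t$ (which follows from Sard's theorem) and the justification of integration by parts for the Lipschitz function $\phi = f^+$, both of which are standard in this setting.
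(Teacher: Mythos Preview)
The paper does not give its own proof of this lemma; it is simply quoted from Cheeger's original paper \cite{Che70} and used as a black box. Your argument is correct and is exactly Cheeger's classical proof: restrict the first Neumann eigenfunction to the smaller nodal domain, apply the co-area formula to its square, bound the lengths of the level curves via the definition of $h(\Omega)$, and finish with Cauchy--Schwarz. There is nothing to compare against in the paper itself.
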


We will apply this Cheeger inequality to prove Proposition \ref{low}.

The following two elementary isoperimetric inequalities are well-known. For completeness, we provide proofs here.
\begin{lemma}\label{iso 1}
Assume that $\Omega \subset X_g$ is homeomorphic to an open disk. Then
$$\area(\Omega)\leq \length(\partial\Omega).$$
\end{lemma}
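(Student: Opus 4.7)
The plan is to pass to the universal cover and invoke the classical (Bol) isoperimetric inequality in the hyperbolic plane. Since $\Omega \subset X_g$ is an open disk it is simply connected, so the inclusion $\Omega \hookrightarrow X_g$ admits a continuous lift $\tilde{\imath}\colon \Omega \to \H^2$ through the universal covering $\pi\colon \H^2 \to X_g$. Because $\pi$ is a local isometry, so is $\tilde{\imath}$.

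Next I would argue that $\tilde{\imath}$ is actually an isometric embedding. Injectivity is immediate: if $\tilde{\imath}(x_1)=\tilde{\imath}(x_2)$, then applying $\pi$ gives $x_1 = \pi(\tilde{\imath}(x_1)) = \pi(\tilde{\imath}(x_2)) = x_2$. Together with the local isometry property this makes $\tilde{\imath}$ an isometry onto its image $\tilde{\Omega} := \tilde{\imath}(\Omega)\subset \H^2$, so $\tilde{\Omega}$ is a topological open disk with $\area(\tilde{\Omega})=\area(\Omega)$ and $\length(\partial \tilde{\Omega})=\length(\partial \Omega)$.

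It then suffices to prove the inequality for a disk in $\H^2$. Here I would appeal to the classical hyperbolic isoperimetric inequality, which asserts that for any domain $D\subset \H^2$ with rectifiable boundary one has
$$\length(\partial D)^2 \geq \area(D)^2 + 4\pi \area(D),$$
with equality on geodesic disks (as one checks from $\area = 4\pi\sinh^2(r/2)$ and $\length = 4\pi\sinh(r/2)\cosh(r/2)$). Dropping the positive term $4\pi\area(D)$ and taking square roots yields $\length(\partial D)\geq \area(D)$. Applying this to $D=\tilde{\Omega}$ gives the desired estimate $\length(\partial \Omega)\geq \area(\Omega)$.

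There is no substantial obstacle here; the only subtle point is the injectivity of the lift, which rests entirely on simple connectivity of $\Omega$ and the covering identity $\pi\circ\tilde{\imath}=\mathrm{id}_\Omega$. Everything else is a routine application of a well-known inequality in $\H^2$.
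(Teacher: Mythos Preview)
Your proof is correct, but it goes by a different route than the paper's.

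Both arguments begin identically: lift the simply connected disk $\Omega$ isometrically into $\H^2$. After that, you invoke the full Bol isoperimetric inequality $L^2 \geq A^2 + 4\pi A$ and discard the $4\pi A$ term. The paper instead gives a short self-contained computation: working in the upper half-plane model $ds^2 = (dx^2+dy^2)/y^2$, it chooses the function $f(x,y) = -\ln y$, checks that $\Delta f = 1$ and $|\nabla f| = 1$, and then Stokes' theorem gives
\[
\area(\Omega) = \int_\Omega \Delta f = \int_{\partial\Omega} \nabla f \cdot \vec{n} \leq \int_{\partial\Omega} |\nabla f| = \length(\partial\Omega).
\]
The virtue of the paper's argument is that it is entirely elementary and proves exactly the weak inequality needed, without importing the sharp hyperbolic isoperimetric result; moreover the same divergence-theorem trick is reused verbatim in the next lemma (for cylinders, with $f = \ln\cosh\rho$), giving a uniform treatment of both cases. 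Your approach is quicker if one is willing to cite Bol's inequality as a black box, but it does not generalize as directly to the cylinder case.
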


\begin{proof}
Since $\Omega \subset X_g$ is an open disk, one may regard $\Omega$ as a disk in the upper half plane $\mathbb{H}$ endowed with the standard hyperbolic metric $ds^2 =\frac{dx^2+dy^2}{y^2}$. For a smooth function $f(x,y)$ on $\Omega$, direct computations give that
$$\nabla f= y^2\frac{\partial f}{\partial x}\frac{\partial }{\partial x} + y^2\frac{\partial f}{\partial y}\frac{\partial }{\partial y},$$
$$|\nabla f|^2= y^2(|\frac{\partial f}{\partial x}|^2 + |\frac{\partial f}{\partial y}|^2),$$
$$\Delta f=y^2(\frac{\partial^2 f}{\partial x^2} + \frac{\partial^2 f}{\partial y^2})$$

By Stokes' Theorem and the Cauchy-Schwarz inequality, we have
\bear \label{sto}
\int_\Omega \Delta f &=& \int_{\partial\Omega}\nabla f \cdot \vec n \\
&\leq& \int_{\partial\Omega}|\nabla f|. \nonumber
\eear

Set $f(x,y)=-\ln y$. It is clear that
\[\Delta f=1 \quad \emph{and} \quad |\nabla f|=1.\]

Plug the two equations above into \eqref{sto}, we get
$$\int_\Omega dV \leq \int_{\partial\Omega} ds.$$

That is, $$\area(\Omega)\leq \length(\partial\Omega).$$

The proof is complete.
\end{proof}

\begin{lemma}\label{iso 2}
Assume that a domain $\Omega\subset X_g$ is topologically a cylinder. Then
$$\area(\Omega)\leq \length(\partial\Omega).$$
\end{lemma}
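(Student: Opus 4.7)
The plan is to mimic the proof of Lemma \ref{iso 1}, applying Stokes' theorem with the Busemann-type function $f(x,y) = -\ln y$ on $\mathbb{H}$, which already satisfies $\Delta f = 1$ and $|\nabla f| = 1$. The new difficulty compared with the disk case is that $\Omega$ is not simply connected, so $f$ does not in general descend from $\mathbb{H}$ to $\Omega$; I will handle this by passing to the universal cover and splitting into two topological cases.

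My approach is to lift $\Omega$ via the universal cover $\pi : \mathbb{H}\to X_g$. Pick a connected component $\tilde{\Omega}$ of $\pi^{-1}(\Omega)$, and let $H\leq \pi_1(X_g)$ be its stabilizer, so that $\pi$ induces an isometry $\tilde{\Omega}/H \cong \Omega$. Since $\pi_1(X_g)$ is a Fuchsian group of a closed surface of genus $\geq 2$, it contains neither torsion nor parabolic elements; combined with $\pi_1(\Omega) \cong \Z$, this forces $H$ to be either trivial (when the core of $\Omega$ is null-homotopic in $X_g$) or infinite cyclic generated by a hyperbolic isometry $\phi$ (when the core is essential).

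If $H=\{1\}$, then $\pi$ embeds $\tilde{\Omega}$ isometrically into $\mathbb{H}$ as a topological cylinder, and I would run the argument of Lemma \ref{iso 1} verbatim on $\tilde{\Omega}$ — Stokes' theorem does not require simple connectivity of the domain, so with $f=-\ln y$,
\[\area(\Omega) = \area(\tilde{\Omega}) = \int_{\tilde{\Omega}}\Delta f \leq \int_{\partial \tilde{\Omega}} |\nabla f| = \length(\partial \tilde{\Omega}) = \length(\partial \Omega).\]
If $H = \langle\phi\rangle$, I would first conjugate by an isometry of $\mathbb{H}$ so that $\phi(z)=\lambda z$ for some $\lambda>1$, i.e., place the two fixed points of $\phi$ at $0$ and $\infty$. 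Then $f\circ \phi = f - \ln\lambda$, so although $f$ itself is not $\phi$-invariant, both $\nabla f = -y\,\partial_y$ and $\Delta f = 1$ are (a direct check using $d\phi_p = \lambda \cdot \mathrm{Id}$), and therefore descend to a well-defined unit-norm vector field on $\Omega$ with divergence $1$. Applying the divergence theorem on $\Omega$ yields
\[\area(\Omega) = \int_\Omega \Delta f \, dA = \int_{\partial \Omega}\nabla f \cdot \vec{n} \, ds \leq \length(\partial \Omega).\]

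The main subtlety to verify carefully is the essential-core case: since the function $-\ln y$ itself does not descend to $\Omega$, one must pass to the gradient vector field and check its $\phi$-equivariance, and the key geometric observation is that the particular conjugation placing the fixed points of $\phi$ at $0$ and $\infty$ — equivalently, choosing the Busemann function associated to one of those fixed points — is exactly what makes the gradient $\phi$-invariant.
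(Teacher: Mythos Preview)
Your proof is correct and follows the same overall architecture as the paper's: split into the two cases according to whether the core curve of $\Omega$ is null-homotopic in $X_g$, pass to the appropriate cover, and apply Stokes' theorem with a test object whose Laplacian/divergence equals $1$ and whose gradient/norm is at most $1$.

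The details differ slightly. In the null-homotopic case the paper does not lift and apply Stokes on the annulus directly; instead it observes that one boundary component of $\Omega$ bounds a disk $D\subset X_g$ containing $\Omega$ and invokes Lemma~\ref{iso 1} on $D$. In the essential case the paper works in the cylinder model $(\rho,t)\in\R\times\mathbb S^1$ with the metric $d\rho^2+\ell^2\cosh^2\rho\,dt^2$ and uses the \emph{function} $f(\rho,t)=\ln\cosh\rho$, which is genuinely single-valued on the cylinder and satisfies $\Delta f=1$, $|\nabla f|=|\tanh\rho|\le 1$. Your choice $-\ln y$ corresponds in these coordinates to $\ln\cosh\rho-\ell t$; it is not single-valued on the cylinder, which is exactly why you are forced to pass to the gradient vector field and check its $\phi$-equivariance. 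Both choices work; yours has the aesthetic advantage of using the same Busemann vector field in both cases, while the paper's avoids the equivariance discussion by writing down an explicitly invariant function.
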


\begin{proof}
Since $\Omega\subset X_g$ is topologically a cylinder, the boundary $\partial\Omega$ consists of two homotopic simple closed loops. We split the proof into two cases.

Case-1. The two boundary closed loops of $\Omega$ are homotopic to a point on $X_g$.

For this case, one may assume that one boundary closed loop bounds a disk $D$ such that
\[\Omega\subset D \quad \text{and} \quad \partial D \subset \partial \Omega.\]

Then by Lemma \ref{iso 1} we have,
\beqar
\area(\Omega) &\leq& \area(D) \\
&\leq& \length(\partial D)\\
&\leq& \length(\partial\Omega).
\eeqar
\

Case-2. The two boundary closed loops of $\Omega$ are not homotopic to a point.

For this case, one may assume that $\gamma$ is the unique closed geodesic representing a component of $\partial\Omega$.
Set $$l=\length(\gamma).$$

Let $\R\times \mathbb S^1$ be the infinite cylinder of parameters $(\rho,t)$ endowed with the hyperbolic metric
$$ds^2=d\rho^2 + l^2 \cosh^2\rho dt^2.$$
This is a hyperbolic cylinder with infinite width whose unique closed geodesic has length equal to $l$. By assumption one may assume that $\Omega$ is a subset of this infinite hyperbolic cylinder $\R\times \mathbb S^1$. For a smooth function $f(\rho,t)$ on $\Omega$, direct computations show that
$$\nabla f= \frac{\partial f}{\partial \rho}\frac{\partial }{\partial \rho} + \frac{1}{l^2 \cosh^2\rho}\frac{\partial f}{\partial t}\frac{\partial }{\partial t},$$
$$|\nabla f|^2= |\frac{\partial f}{\partial \rho}|^2 + \frac{1}{l^2 \cosh^2\rho}|\frac{\partial f}{\partial t}|^2,$$
$$\Delta f=\frac{\partial^2 f}{\partial \rho^2} + \tanh {\rho} \cdot \frac{\partial f}{\partial \rho} + \frac{1}{l^2 \cosh^2\rho}\frac{\partial^2 f}{\partial t^2}.$$

Similar as in the proof of Lemma \ref{iso 1} we have
\bear \label{sto-2}
\int_\Omega \Delta f &=& \int_{\partial\Omega}\nabla f \cdot \vec n \\
&\leq& \int_{\partial\Omega}|\nabla f|. \nonumber
\eear

Set $f(\rho,t)=\ln\cosh\rho$. It is clear that
\[\Delta f=1 \quad \emph{and} \quad |\nabla f|=|\tanh\rho|.\]

Plug these two equations above into \eqref{sto-2}, we get
\beqar
\int_\Omega dV &\leq& \int_{\partial\Omega} |\tanh\rho| ds \\
&\le& \int_{\partial\Omega} ds.
\eeqar

That is, $$\area(\Omega)\leq \length(\partial\Omega).$$

The proof is complete.
\end{proof}

Now we are ready to prove Proposition \ref{low}.
\bp [Proof of Proposition \ref{low}]
Let $X_g \in \M_g^{\geq \eps}$. By Cheeger's inequality it suffices to provide a lower bound for the Cheeger isoperimetric constant $h(X_g)$.

Let $\Gamma$ be a set of smooth curves dividing $X_g$ into two disjoint pieces $A_1$ and $A_2$. Then $\Gamma$ must be one of the following three cases:

$(a)$. $\Gamma$ contains a simple closed curve bounding a disk $D$ in $X_g$.

$(b)$. $\Gamma$ contains two simple closed curves $\tau$ and $\gamma$ which are homotopic to each other in $X_g$.

$(c)$. $\Gamma$ is not of type (a) and (b). That is, no two pairwise simple closed curves in $\Gamma$ are homotopic, and no simple closed curve in $\Gamma$ is homotopically trivial. In particular, $\Gamma$ contains at least one nontrivial closed curve.\\

If $\Gamma$ is of type (a), by Lemma \ref{iso 1} we have
$$\length(\Gamma)\geq \area(D).$$

Thus,
\beqar
\frac{\length(\Gamma)}{\min \{\area(A_1),\area(A_2)\}} \geq  \frac{\length(\Gamma)}{\area(D)} \geq 1.
\eeqar
\

If $\Gamma$ is of type (b), then the two curves $\tau$ and $\gamma$ bounds a cylinder $\Omega$ in $X_g$. By Lemma \ref{iso 2} we have
\beqar
\length(\Gamma) &\geq& \length(\gamma)+\length(\tau) \\
&=& \length(\partial\Omega)\\
&\geq& \area(\Omega).
\eeqar

Thus,
\beqar
\frac{\length(\Gamma)}{\min \{\area(A_1),\area(A_2)\}} \geq  \frac{\length(\Gamma)}{\area(D)} \geq 1.
\eeqar
\

If $\Gamma$ is of type (c), since $\Gamma$ contains at least one nontrivial closed curve,
$$\length (\Gamma) \geq \sys (X_g) \geq \eps.$$

It is clear that
\beqar
\min \{\area(A_1),\area(A_2)\} &\leq& \frac{1}{2} \area (X_g) \\
&= & 2\pi (g-1).
\eeqar

Thus,
$$\frac{\length(\Gamma)}{\min \{\area(A_1),\area(A_2)\}} \geq \frac{\eps}{2\pi (g-1)}.$$
\newline

From these three cases above, the Cheeger isoperimetric constant $h(X_g)$ satisfies that
\beqar
h(X_g) &=& \inf \frac{\length(\Gamma)}{\min \{\area(A_1),\area(A_2)\}}\\
&\geq& \min \{1, \frac{\eps}{2\pi(g-1)}\}.
\eeqar

By Lemma \ref{Cheeger}, we have
\beqar
\lambda_1(X_g) &\geq& \frac{h^2(X_g)}{4} \\
&\geq& \min \{\frac{1}{4}, \frac{\eps^2}{16\pi^2(g-1)^2}\}.
\eeqar

For large enough $g$, clearly we get
\[\lambda_1(X_g)\geq \frac{\eps^2}{16\pi^2}\cdot \frac{1}{g^2}.\]

Then conclusion follows.
\ep


\section{Proof of Proposition \ref{mt-2}}\label{sec-ex}
In this section we will construct a hyperbolic surface satisfying Proposition \ref{mt-2}.

For a trivalent graph $G$ with $V$ vertices of $E$ edges. It is clear that
\[3V= 2E.\]
So every trivalent graph must have an even number of vertices. For any $\eps>0$, we let $$d(\eps)=2\arcsinh \frac{1}{2\sinh \frac{\eps}{4}}$$ be the constant in \eqref{d-ex}. One may assume $W(\eps)>0$ is a constant, only depending on $\eps$, such that
\bear \label{w-s-e}
W(\eps) \cdot d(\eps) \geq 2\eps.
\eear
Clearly as $\eps$ goes to infinity, $W(\eps)$ also goes to infinity. As introduced in Subsection \ref{subs-tg} there always exists an even integer $V_0=V_0(\eps)>0$, only depending on $\eps$, such that for all even integer $V\geq V_0$, there exists a trivalent graph with $V$ vertices, denoted by  $G(V)$, such that the girth $\gir(G(V))$ of $G(V)$ satisfies that
\bear \label{gir-in}
\gir(G(V))\geq W(\eps).
\eear
In particular,
\[\gir(G(V_0))\geq W(\eps).\]

For large enough $g\ (g>>V_0)$, there exists two numbers $g_0$ and $V_1$ such that
\bear \label{ver-eq}
2g-2=g_0(V_0+2)+(V_1+2)-2
\eear
where $V_1$ satisfies
\bear \label{rem-eq}
V_0\leq V_1 \leq 2V_0+2.
\eear

In particular, by \eqref{gir-in} there exists a trivalent graph $G(V_1)$ with vertices $V_1$ such that
\[\gir(G(V_1))\geq W(\eps).\]

We put $g_0$ graphs $G(V_0)$ and a graph $G(V_1)$ from left to right and add one edge between any two consecutive graphs. Then we get a new trivalent graph, denoted by $G_g$. One may see this graph $G_g$ as in figure \ref{pic4}.
\begin{figure}[ht]
\begin{center}
\begin{tikzpicture}[scale=1]

\draw (1,0)-- ++(-1,0) ++(-1,0)-- ++(-1,0) ++(-1,0)-- ++(-1,0);
\draw[dashed] (0,0.5)-- ++(-1,0) ++(-1,0)-- ++(-1,0);
\draw[dashed] (0,-0.5)-- ++(-1,0) ++(-1,0)-- ++(-1,0);
\draw (0,-0.5)-- ++(0,1) (-1,-0.5)-- ++(0,1) (-2,-0.5)-- ++(0,1) (-3,-0.5)-- ++(0,1) (-4,-0.5)-- ++(0,1);
\draw[dashed] (-4,0.5) ..controls (-5.5,0.5) and (-5.5,-0.5).. (-4,-0.5);

\draw[dashed] (1,0)--++(2,0);
\draw (3,0)--++(1,0) (4,-0.5)--++(0,1) ++(1,-1)--++(0,1) ++(0,-0.5)--++(1,0) ++(0,-0.5)--++(0,1);
\draw[dashed] (4,0.5)--++(1,0) (4,-0.5)--++(1,0);
\draw[dashed] (6,0.5) ..controls (8,0.5) and (8,-0.5).. (6,-0.5);

\filldraw (0,0)circle[radius=0.03] (1,0)circle[radius=0.03] (3,0)circle[radius=0.03] (4,0)circle[radius=0.03] (5,0)circle[radius=0.03] (6,0)circle[radius=0.03] (-1,0)circle[radius=0.03] (-2,0)circle[radius=0.03] (-3,0)circle[radius=0.03] (-4,0)circle[radius=0.03];
\filldraw (0,0.5)circle[radius=0.03] (-1,0.5)circle[radius=0.03] (-2,0.5)circle[radius=0.03] (-3,0.5)circle[radius=0.03] (-4,0.5)circle[radius=0.03] (4,0.5)circle[radius=0.03] (5,0.5)circle[radius=0.03] (6,0.5)circle[radius=0.03];
\filldraw (0,-0.5)circle[radius=0.03] (-1,-0.5)circle[radius=0.03] (-2,-0.5)circle[radius=0.03] (-3,-0.5)circle[radius=0.03] (-4,-0.5)circle[radius=0.03] (4,-0.5)circle[radius=0.03] (5,-0.5)circle[radius=0.03] (6,-0.5)circle[radius=0.03];

\draw (-0.5,0) node {$G(V_0)$}  (-2.5,0) node {$G(V_0)$}  (-4.5,0) node {$G(V_0)$}  (4.5,0) node {$G(V_0)$}  (6.5,0) node {$G(V_1)$};

\end{tikzpicture}
\end{center}
\caption{Graph $G_g$}\label{pic4}
\end{figure}
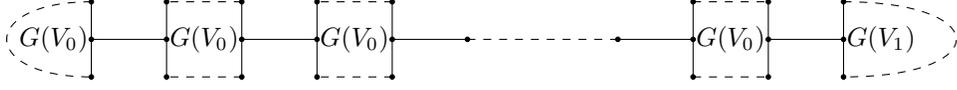

The number of vertices of $G_g$ is $g_0(V_0 +2) + (V_1 +2) -2$. By \eqref{ver-eq} we know that this graph $G_g$ has $(2g-2)$ vertices. And it is clear that any non-trivial loop in $G_g$ can only happen in either one $G(V_0)$ or $G(V_1)$. Thus, the girth satisfies
\bear \label{gir-in-g}
\gir(G_g)\geq W(\eps).
\eear

\begin{rem}\label{r-e-small}
If $\eps$ is small enough, we may choose a special graph $G_g$ as shown in figure \ref{pic5}. In this case, the girth $W(\eps)=1$.
\begin{figure}[ht]
\begin{center}
\begin{tikzpicture}[scale=1]

\draw (1,0)-- ++(-1,0) ++(-1,0)-- ++(-1,0) ++(-1,0)-- ++(-1,0);
\draw[dashed] (1,0)--++(2,0);
\draw (3,0)--++(1,0) ++(1,0)--++(1,0);

\draw (0,0) ..controls(-0.5,0.3).. (-1,0) (0,0) ..controls(-0.5,-0.3).. (-1,0);
\draw (-2,0) ..controls(-2.5,0.3).. (-3,0) (-2,0) ..controls(-2.5,-0.3).. (-3,0);
\draw (4,0) ..controls(4.5,0.3).. (5,0) (4,0) ..controls(4.5,-0.3).. (5,0);

\draw (-4,0) ..controls(-5,0.8)and(-5,-0.8).. (-4,0);
\draw (6,0) ..controls(7,0.8)and(7,-0.8).. (6,0);

\filldraw (0,0)circle[radius=0.03] (1,0)circle[radius=0.03] (3,0)circle[radius=0.03] (4,0)circle[radius=0.03] (5,0)circle[radius=0.03] (6,0)circle[radius=0.03] (-1,0)circle[radius=0.03] (-2,0)circle[radius=0.03] (-3,0)circle[radius=0.03] (-4,0)circle[radius=0.03];

\end{tikzpicture}
\end{center}
\caption{}\label{pic5}
\end{figure}
\end{rem}

The surface $\mathcal{X}_g$ of genus $g$ in Proposition \ref{mt-2} is constructed as follows.

Recall that $\mathcal{P}_{\eps}$ is a pair of pants whose boundary closed geodesics all have length equal to $\eps$. We replace each vertex in the trivalent graph $G_g$ by $\mathcal{P}_{\eps}$ and glue two pair of pants along one boundary loop if they are connected by one edge. In this way, we get a hyperbolic surface $\mathcal{X}_g$ of genus $g$. For examples: for large $\eps$ and graph as showed in figure \ref{pic4}, the surface $\mathcal{X}_g$ looks like figure \ref{pic6}; and for small $\eps$ and the graph as showed in figure \ref{pic5}, the surface $\mathcal{X}_g$ looks like figure \ref{pic7}.

\begin{figure}[ht]
\begin{center}
\begin{tikzpicture}[scale=0.75]

\draw (0,0) +(0,0.3) ..controls+(-1.4,0).. +(-1.4,1)  +(0,-0.3) ..controls+(-1.4,0).. +(-1.4,-1)
+(-2,1)--+(-2,-1)
+(-2,1) ..controls+(0.3,0.1).. +(-1.4,1)  +(-2,1) ..controls+(0.3,-0.1).. +(-1.4,1)
+(-2,-1) ..controls+(0.3,0.1).. +(-1.4,-1)  +(-2,-1) ..controls+(0.3,-0.1).. +(-1.4,-1);
\draw[red,very thick] (0,0) +(0,0.3) ..controls+(0.1,-0.3).. +(0,-0.3)  +(0,0.3) ..controls+(-0.1,-0.3).. +(0,-0.3);

\draw (-6,0) +(0,0.3) ..controls+(1.4,0).. +(1.4,1)  +(0,-0.3) ..controls+(1.4,0).. +(1.4,-1)
+(2,1)--+(2,-1)
+(2,1) ..controls+(-0.3,0.1).. +(1.4,1)  +(2,1) ..controls+(-0.3,-0.1).. +(1.4,1)
+(2,-1) ..controls+(-0.3,-0.1).. +(1.4,-1)  +(2,-1) ..controls+(-0.3,0.1).. +(1.4,-1);
\draw[red,very thick] (-6,0) +(0,0.3) ..controls+(0.1,-0.3).. +(0,-0.3)  +(0,0.3) ..controls+(-0.1,-0.3).. +(0,-0.3);

\draw (-6,0) +(0,0.3) ..controls+(-1.4,0).. +(-1.4,1)  +(0,-0.3) ..controls+(-1.4,0).. +(-1.4,-1)
+(-2,1)--+(-2,-1)
+(-2,1) ..controls+(0.3,0.1).. +(-1.4,1)  +(-2,1) ..controls+(0.3,-0.1).. +(-1.4,1)
+(-2,-1) ..controls+(0.3,0.1).. +(-1.4,-1)  +(-2,-1) ..controls+(0.3,-0.1).. +(-1.4,-1);

\draw (2,0) +(0,0.3) ..controls+(1.4,0).. +(1.4,1)  +(0,-0.3) ..controls+(1.4,0).. +(1.4,-1)
+(2,1)--+(2,-1)
+(2,1) ..controls+(-0.3,0.1).. +(1.4,1)  +(2,1) ..controls+(-0.3,-0.1).. +(1.4,1)
+(2,-1) ..controls+(-0.3,-0.1).. +(1.4,-1)  +(2,-1) ..controls+(-0.3,0.1).. +(1.4,-1);
\draw[red,very thick] (2,0) +(0,0.3) ..controls+(0.1,-0.3).. +(0,-0.3)  +(0,0.3) ..controls+(-0.1,-0.3).. +(0,-0.3);

\draw (0,0) +(0,0.3)--+(0.1,0.3) +(0,-0.3)--+(0.1,-0.3);
\draw (2,0) +(0,0.3)--+(-0.1,0.3) +(0,-0.3)--+(-0.1,-0.3);

\draw[dashed] (0.2,0)--(1.8,0);

\draw[dashed] (-3,1) +(1.3,0.1)..controls+(-1.3,0.4).. +(-1.3,0.1);
\draw[dashed] (-3,-1) +(1.3,-0.1)..controls+(-1.3,-0.4).. +(-1.3,-0.1);

\draw[dashed] (-8,0) +(0.3,1.1)..controls+(-3,0)and+(-3,0)..+(0.3,-1.1);

\draw[dashed] (4,0) +(-0.3,1.1)..controls+(3,0)and+(3,0)..+(-0.3,-1.1);

\draw (-3,0) node {$\Sigma(G(V_0))$};
\draw (-9,0) node {$\Sigma(G(V_0))$};
\draw (5,0) node {$\Sigma(G(V_1))$};

\end{tikzpicture}
\end{center}
\caption{Surface $\mathcal{X}_g$} \label{pic6}
\end{figure}

\begin{figure}[h]
\begin{center}
\begin{tikzpicture}[scale=1]

\draw (0,0.3) ..controls +(-0.5,0)and +(0.5,0).. +(-1,0.5)  (0,-0.3) ..controls +(-0.5,0)and +(0.5,0).. +(-1,-0.5);
\draw (-2,0.3) ..controls +(0.5,0)and +(-0.5,0).. +(1,0.5)  (-2,-0.3) ..controls +(0.5,0)and +(-0.5,0).. +(1,-0.5);

\draw (-2,0.3) ..controls +(-0.5,0)and +(0.5,0).. +(-1,0.5)  (-2,-0.3) ..controls +(-0.5,0)and +(0.5,0).. +(-1,-0.5);
\draw (-4,0.3) ..controls +(0.5,0)and +(-0.5,0).. +(1,0.5)  (-4,-0.3) ..controls +(0.5,0)and +(-0.5,0).. +(1,-0.5);

\draw (-4,0.3) ..controls +(-0.5,0)and +(0.5,0).. +(-1,0.5)  (-4,-0.3) ..controls +(-0.5,0)and +(0.5,0).. +(-1,-0.5);
\draw (-5,0.8) ..controls +(-0.7,0)and +(0,0.3).. +(-1,-0.8) (-5,-0.8) ..controls +(-0.7,0)and +(0,-0.3).. +(-1,0.8);

\draw (2,0.3) ..controls +(0.5,0)and +(-0.5,0).. +(1,0.5)  (2,-0.3) ..controls +(0.5,0)and +(-0.5,0).. +(1,-0.5);
\draw (3,0.8) ..controls +(0.7,0)and +(0,0.3).. +(1,-0.8) (3,-0.8) ..controls +(0.7,0)and +(0,-0.3).. +(1,0.8);

\draw (0,0) +(0,0.3)--+(0.1,0.3) +(0,-0.3)--+(0.1,-0.3);
\draw (2,0) +(0,0.3)--+(-0.1,0.3) +(0,-0.3)--+(-0.1,-0.3);

\draw[dashed] (0.3,0)--(1.7,0);

\draw (-1,-0.2) ..controls +(0.1,0).. +(0.3,0.2)  +(0,0) ..controls +(-0.1,0).. +(-0.3,0.2)
+(0,0.4) ..controls +(0.1,0).. +(0.3,0.2)  +(0,0.4) ..controls +(-0.1,0).. +(-0.3,0.2)
+(0.3,0.2)--+(0.4,0.3)  +(-0.3,0.2)--+(-0.4,0.3);

\draw (-3,-0.2) ..controls +(0.1,0).. +(0.3,0.2)  +(0,0) ..controls +(-0.1,0).. +(-0.3,0.2)
+(0,0.4) ..controls +(0.1,0).. +(0.3,0.2)  +(0,0.4) ..controls +(-0.1,0).. +(-0.3,0.2)
+(0.3,0.2)--+(0.4,0.3)  +(-0.3,0.2)--+(-0.4,0.3);

\draw (-5,-0.2) ..controls +(0.1,0).. +(0.3,0.2)  +(0,0) ..controls +(-0.1,0).. +(-0.3,0.2)
+(0,0.4) ..controls +(0.1,0).. +(0.3,0.2)  +(0,0.4) ..controls +(-0.1,0).. +(-0.3,0.2)
+(0.3,0.2)--+(0.4,0.3)  +(-0.3,0.2)--+(-0.4,0.3);

\draw (3,-0.2) ..controls +(0.1,0).. +(0.3,0.2)  +(0,0) ..controls +(-0.1,0).. +(-0.3,0.2)
+(0,0.4) ..controls +(0.1,0).. +(0.3,0.2)  +(0,0.4) ..controls +(-0.1,0).. +(-0.3,0.2)
+(0.3,0.2)--+(0.4,0.3)  +(-0.3,0.2)--+(-0.4,0.3);

\draw[dashed][red,very thick] (0,0) +(0,-0.3) ..controls +(0.1,0.3).. +(0,0.3);
\draw[red,very thick] (0,0) +(0,-0.3) ..controls +(-0.1,0.3).. +(0,0.3);

\draw[dashed][red,very thick] (2,0) +(0,-0.3) ..controls +(0.1,0.3).. +(0,0.3);
\draw[red,very thick] (2,0) +(0,-0.3) ..controls +(-0.1,0.3).. +(0,0.3);

\draw[dashed][red,very thick] (-2,0) +(0,-0.3) ..controls +(0.1,0.3).. +(0,0.3);
\draw[red,very thick] (-2,0) +(0,-0.3) ..controls +(-0.1,0.3).. +(0,0.3);

\draw[dashed][red,very thick] (-4,0) +(0,-0.3) ..controls +(0.1,0.3).. +(0,0.3);
\draw[red,very thick] (-4,0) +(0,-0.3) ..controls +(-0.1,0.3).. +(0,0.3);

\draw[dashed] (-1,0.5) +(0,-0.3) ..controls +(0.1,0.3).. +(0,0.3);
\draw (-1,0.5) +(0,-0.3) ..controls +(-0.1,0.3).. +(0,0.3);

\draw[dashed] (-1,-0.5) +(0,-0.3) ..controls +(0.1,0.3).. +(0,0.3);
\draw (-1,-0.5) +(0,-0.3) ..controls +(-0.1,0.3).. +(0,0.3);

\draw[dashed] (-3,0.5) +(0,-0.3) ..controls +(0.1,0.3).. +(0,0.3);
\draw (-3,0.5) +(0,-0.3) ..controls +(-0.1,0.3).. +(0,0.3);

\draw[dashed] (-3,-0.5) +(0,-0.3) ..controls +(0.1,0.3).. +(0,0.3);
\draw (-3,-0.5) +(0,-0.3) ..controls +(-0.1,0.3).. +(0,0.3);

\draw (-6,0) ..controls +(0.35,-0.1).. +(0.7,0);
\draw[dashed] (-6,0) ..controls +(0.35,0.1).. +(0.7,0);

\draw (4,0) ..controls +(-0.35,-0.1).. +(-0.7,0);
\draw[dashed] (4,0) ..controls +(-0.35,0.1).. +(-0.7,0);

\end{tikzpicture}
\end{center}
\caption{Surface $\mathcal{X}_g$} \label{pic7}
\end{figure}

The graph $G_g$ has $(3g-3)$ edges. The corresponding closed loops in $\mathcal{X}_g$ form a pants decomposition, denoted by $P$, of $\mathcal{X}_g$.

In the Fenchel-Nielsen  coordinate with respect to this pants decomposition $P$, the surface $\mathcal{X}_g$ has $(3g-3)$ length parameters being $\eps$ and arbitrary twist parameters. So we not only just construct one surface, actually we construct a subset of dimension $(3g-3)$ in moduli space $\sM_g$ in some sense. In this paper, we only use one such a surface to get the upper bound in Theorem \ref{mt-1}.

The (thick) closed geodesics in figures \ref{pic6} and \ref{pic7} are separating. They are $g_0$ ones in total. So in particular, we have the number
\bear
g_0 \leq \#\{\gamma \in P; \ \text{$\gamma$ is separating on $\mathcal{X}_g$}\}\leq (3g-3).
\eear

By \eqref{ver-eq} we know that
\bear \label{num-sep}
\#\{\gamma \in P; \ \text{$\gamma$ is separating on $\mathcal{X}_g$}\}\asymp_g g.
\eear

\begin{rem*}
These $g_0$ separating loops separate the surface $\mathcal{X}_g$ into $(g_0+1)$ components. Each component has similar area. More precisely, the largest area of the components is not bigger than twice that of the smallest one. Moreover, these components can be sorted from left to right. These are important in proving that the $k$-th eigenvalue of this surface $\mathcal{X}_g$ roughly realizes the rate of growth $\frac{1}{g^2}$ for large $g$. In Section \ref{sec-upp} we will see how these work in our proof.
\end{rem*}

Now we are ready to prove this surface $\mathcal{X}_g$ satisfies all those three properties in Proposition \ref{mt-2}. More precisely, 

\begin{pro*}[= Proposition \ref{mt-2}]
	For any constant $\eps>0$, there exists an integer $g(\eps)>0$, depending on $\eps$, such that for all $g\geq g(\eps)$, there exists a hyperbolic surface $\mathcal{X}_g $ of genus $g$ satisfying that
	
	$(a)$. $\mathcal{X}_g \in \sM_g^{\geq \eps}$.
	
	$(b)$. There exists a pants decomposition $P$ of $\mathcal{X}_g$ such that for every curve $\gamma\in P$, the length $\ell(\gamma)$ of $\gamma$ is equal to $\eps$.
	
	$(c)$. The number $\#\{\gamma \in P; \ \text{$\gamma$ is separating on $\mathcal{X}_g$}\}\asymp_g g$.
\end{pro*}

\bp
Let $\mathcal{X}_g$ be the closed hyperbolic surface of genus $g$ as constructed above.

First by the construction above, the obvious pants decomposition $P$ clearly satisfies part $(b)$.

By \eqref{num-sep} we also know that part $(c)$ holds.

So it suffices to show part $(a)$. We will show that
\[\sys(\mathcal{X}_g) = \eps.\]

Let $\gamma$ be a shortest non-trivial loop on $\mathcal{X}_g$. Since every curve in $P$ has length equal to $\eps$, we have
\[\ell(\gamma)\leq \eps.\]

If $\gamma \in P$, then we are done.

Now we assume that $\gamma \notin P$. Since $P$ is a pants decomposition of $\mathcal{X}_g$, one may assume that $\gamma$ (transversely) intersects with certain closed geodesics in $P$. This in particular induces a closed loop, denoted by $\Gamma$, on the corresponding trivalent graph $G_g$. One may see it as follows: as $\gamma$ goes from one pair of pants to its neighboring one through a boundary curve, $\Gamma$ goes from the corresponding vertex to the corresponding neighboring vertex through the corresponding edge, and finally $\Gamma$ will come back to its original vertex.

We will split the remaining proof into two cases.

Case-1: The closed loop $\Gamma \subset G_g$ is non-trivial.

For this case, $\Gamma$ contains a non-trivial simple loop in $G_g$. By \eqref{gir-in-g} we know that $$\mathop{\rm girth}(G_g)\geq W(\eps).$$

So we have,
\[\length{\Gamma} \geq W(\eps).\]

Recall that on each pair of pant $\mathcal{P}_\eps$, the distance between two boundary curves is $d(\eps)$ (as shown in figure \ref{pic2}). So we have
$$\ell(\gamma) \geq W(\eps)\cdot d(\eps).$$

Then by \eqref{w-s-e} we have
\[\ell(\gamma) \geq 2\eps.\]

Which is a contradiction.\\

Case-2: The closed loop $\Gamma \subset G_g$ is trivial.

For this case, the closed loop $\Gamma$ must turn back twice in the sense that it goes through one edge and then immediately comes back through this edge. For turning back, the corresponding arc $\tau'$ in $\gamma$ will be contained in one precise pair of pants $\mathcal{O}$. And it starts from a boundary curve in this pair of pants and then comes back to this boundary curve again. Since $\gamma$ is a closed geodesic, this arc $\tau'$ can not be homotopic to certain arc in the boundary curve. And this arc $\tau'$ has two intersection points with one boundary curve of $\mathcal{O}$. So the length $\ell(\tau')$ of this arc must be bigger than or equal to the length of the arc $\tau$ as shown in figure \ref{pic2}. By \eqref{tau-in} we know that
\[\ell(\tau')\geq \ell(\tau) > \frac{\eps}{2}.\]

Since $\gamma$ is a closed geodesic and $\Gamma \subset G_g$ is trivial, the complement $\gamma \setminus \tau'$ of $\tau'$ also contains an arc $\tau''$ which has two intersection points with one boundary curve of another pair of pants. Similar as above we also have
\[\ell(\tau'')\geq \ell(\tau) > \frac{\eps}{2}.\]

Thus, we have
\beqar
\ell(\gamma) &\geq& \ell(\tau')+\ell(\tau'')\\
&>&\eps.
\eeqar

This again contradicts our assumption that $\gamma$ is a shortest non-trivial loop.\\

Thus, only all the curves in $P$ realize the systole of $\mathcal{X}_g$. So we have
$$\sys(\mathcal{X}_g) = \eps.$$

In particular, part (a) also holds. We finish the proof.
\ep

\section{Upper Bound}\label{sec-upp}
In this section we will prove the upper bounds in Theorem \ref{mt-1} and \ref{mt-large}. Let $\mathcal{X}_g$ be a closed surface of genus $g$ in Proposition \ref{mt-2} as constructed in the last section. Recall that $\sys(\mathcal{X}_g)=\eps$. We will show that
\begin{proposition} \label{upp}
For all integer $k\geq 1$ there exists a positive constant $\beta(\eps)$, only depending on $\eps$, such that for large enough $g$,
\[\lambda_k(\mathcal{X}_g) \leq \frac{\beta(\eps) \cdot k^2}{g^2}.\]
\end{proposition}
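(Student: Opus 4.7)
The plan is to apply the min--max principle underlying \eqref{lamb-k} to a $(k{+}1)$-dimensional test subspace $V\subset H^1(\mathcal{X}_g)$ whose maximal Rayleigh quotient is at most $\beta(\eps)k^2/g^2$. The geometric backbone is that, by Proposition~\ref{mt-2}(c) and the construction of Section~\ref{sec-ex}, the $g_0$ separating curves in $\partial P$ cut $\mathcal{X}_g$ linearly into $N:=g_0+1\asymp_g g$ pieces $P_1,\ldots,P_N$, each of hyperbolic area bounded between two positive constants depending only on $\eps$. Morally, $\mathcal{X}_g$ is a ``thickened path of length $N$'', and on such a path the $k$-th Laplace eigenvalue scales like $k^2/N^2\asymp k^2/g^2$; the test functions will be the discrete cosine modes along that path.

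To carry this out, set $a_{ij}:=\cos\bigl(\tfrac{\pi j(2i-1)}{2N}\bigr)$ for $1\le i\le N$ and $0\le j\le k$, and fix a collar $K_i\cong[-w,w]\times\mathbb{S}^1$ of each separating curve $\gamma_i$ of some half-width $w=w(\eps)>0$ provided by Lemma~\ref{collar}. Define $f_j\in H^1(\mathcal{X}_g)$ to equal the constant $a_{ij}$ on the bulk $P_i\setminus\bigcup_\ell K_\ell$ and to interpolate linearly in the $\rho$-coordinate across each $K_i$ between $a_{ij}$ and $a_{(i+1)j}$; set $V:=\mathrm{span}(f_0,\ldots,f_k)$, which is $(k{+}1)$-dimensional (once $N\ge k{+}1$) because $\{(a_{ij})_{i=1}^N:0\le j\le k\}$ are the first $k{+}1$ columns of the invertible DCT-II matrix. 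Using the collar metric $d\rho^2+\eps^2\cosh^2\rho\,dt^2$, a direct computation gives
\[
\int_{\mathcal{X}_g}\nabla f_j\cdot\nabla f_{j'}=\frac{\eps\sinh w}{2w^2}\sum_{i=1}^{N-1}(a_{(i+1)j}-a_{ij})(a_{(i+1)j'}-a_{ij'}),
\]
and substituting $a_{(i+1)j}-a_{ij}=-2\sin(\tfrac{\pi j}{2N})\sin(\tfrac{\pi j i}{N})$ followed by the DST-I orthogonality $\sum_{i=1}^{N-1}\sin(\tfrac{\pi j i}{N})\sin(\tfrac{\pi j' i}{N})=\tfrac{N}{2}\delta_{jj'}$ shows this Gram matrix is \emph{exactly diagonal}, with $\int|\nabla f_j|^2\le C_1(\eps)\,j^2/N\le C_1(\eps)k^2/N$.

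For the $L^2$ lower bound, writing $f=\sum_j\alpha_j f_j$ and $y_i:=\sum_j\alpha_j a_{ij}$, one has $f\equiv y_i$ on the bulk of $P_i$, so
\[
\int_{\mathcal{X}_g}f^2\ge B_0(\eps)\sum_{i=1}^N y_i^2\ge \tfrac{B_0(\eps)N}{2}\|\vec\alpha\|^2
\]
by the DCT-II identity $\sum_{i=1}^N a_{ij}a_{ij'}=\tfrac{N}{2}(1+\delta_{j0})\delta_{jj'}$, where $B_0(\eps)>0$ is a uniform positive lower bound on each bulk area. Combining yields $R(f)\le\frac{2C_1(\eps)}{B_0(\eps)}\cdot\frac{k^2}{N^2}\le\frac{\beta(\eps)k^2}{g^2}$ for $g$ large. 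The key subtlety — and the reason one obtains the sharp $k^2$ rather than $k^3$ scaling — is the \emph{simultaneous} diagonalization via DST-I (for the gradient Gram matrix) and DCT-II (for the bulk part of the $L^2$ Gram matrix); any crude Cauchy--Schwarz across the $k{+}1$ test functions would lose an extra factor of $k$. The only geometric input beyond Proposition~\ref{mt-2} is the uniform positive lower bound $B_0(\eps)$ on the bulk areas, which holds because each block $P_i$ is built from at least $V_0(\eps)$ pairs of pants while each collar contributes $\eps$-dependent bounded area.
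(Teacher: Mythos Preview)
Your argument is correct and reaches the same conclusion as the paper, but by a genuinely different route. The paper builds $(k{+}1)$ test functions $\varphi_0,\ldots,\varphi_k$ with \emph{pairwise disjoint supports}: it partitions the chain of pieces into $(k{+}1)$ consecutive piles of $g_1\asymp g/k$ pieces each and puts a ``tent'' profile $0,1,2,\ldots,\lfloor g_1/2\rfloor,\ldots,1,0$ on each pile, interpolating across collars. Disjoint supports make both the $L^2$ and the gradient forms diagonal for free, and a direct computation gives $\int|\nabla\varphi_i|^2/\int\varphi_i^2\lesssim g_1/g_1^3\asymp k^2/g^2$ for every $i$; the min--max step is then the same as yours. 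Your approach instead uses \emph{globally supported} discrete cosine modes and obtains the needed diagonality from the DST-I/DCT-II identities. What your approach buys is a closer analogy with the genuine Neumann eigenfunctions of an interval (so the constants are in principle sharper and the $j$-dependence $j^2/N^2$ is visible for each mode individually); what the paper's approach buys is that no trigonometric orthogonality is needed at all---disjoint supports plus a one-line weighted-average argument suffice. In particular, your closing remark that the simultaneous DST-I/DCT-II diagonalisation is ``the reason one obtains the sharp $k^2$ rather than $k^3$'' overstates the point: the paper's elementary tent construction already achieves $k^2$ without any Fourier-type identity.
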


By construction in last section, in figure \ref{pic6} (or figure \ref{pic7} for small $\eps$), we let $\{\gamma_i\}_{1\leq i \leq g_0}$ be these $g_0$ separating (thick) closed geodesics, which are listed in order from left to right. The union $\cup_{1\leq i \leq g_0} \gamma_i$ separates the surface $\mathcal{X}_g$ into $(g_0+1)$ components. We denote them by $M_0,M_1,...,M_{g_0-1}, M_{g_0}$ in order from left to right. For each $1\leq i \leq g_0-1$, each component $M_i$ is a hyperbolic surface of genus $\frac{2+V_0}{2}$ with $2$ closed geodesic boundaries. So the Euler characteristic
\[\chi(M_i)=2-2\cdot\frac{2+V_0}{2}-2=-2-V_0.\]

By the Gauss-Bonnet formula we know that for each $1\leq i \leq g_0-1$,
\bear\label{Area-i}
\area(M_i)=2\pi(V_0+2).
\eear

For $1\leq i \leq g_0$, we let $K_i$ be the collar of $\gamma_i$ (as defined in the Collar Lemma: see Lemma \ref{collar}). For each collar $K_i$, we let $w(\eps)$ to be its half width. That is, $$w(\eps)=\mathop{\rm arcsinh} \frac{1}{\sinh \frac{1}{2}\eps}.$$

Recall by Lemma \ref{collar} the hyperbolic metric on $K_i$ is
\[ds^2=d\rho^2+\eps^2 \cosh^2{\rho}dt^2.\]

Thus, the area of each collar is
\bear \label{are-k}
\area(K_i) & = & \int_{K_i} dV \\
           & = & \int_0^1 \int_{-w}^w \eps \cosh \rho d\rho dt \nonumber \\
           & = & 2\eps \sinh w  \nonumber \\
           & = & \frac{2\eps}{\sinh \frac{1}{2}\eps}. \nonumber
\eear

It is clear that $\area(K_i)<4$.

For each $1\leq i \leq g_0-1$, we set
$$M_i^0 = M_i-\cup_{j=1}^{g_0} K_j$$
to be the complement. Thus, for each $1\leq i \leq g_0-1$ the area satisfies that
\bear \label{are-c-k}
\area(M_i^0) & = & 2\pi(V_0+2) - \frac{1}{2}\area(K_i)-\frac{1}{2}\area(K_{i+1}) \\
             & = & 2\pi(V_0+2) - \frac{2\eps}{\sinh \frac{1}{2}\eps} \nonumber
\eear

It is clear that $\area(M_i^0)>2\pi(V_0+2) - 4>0$.\\

Now we are ready to prove Proposition \ref{upp}.
\bp [Proof of Proposition \ref{upp}]
For any fixed integer $k>0$, let $g_1>0$ be an integer with
\bear \label{g_0-eq}
g_0 -1 = g_1 (k+1) +r
\eear
where $0\leq r\leq k$.

For each $0 \leq i \leq k$ and $1\leq j \leq g_1$, we set
$$M^0_{i,j} = M^0_{ig_1 +j}$$
and
$$A_i = \cup_{1\leq j \leq g_1} M^0_{i,j}.$$
In this way, we divide the union $\cup_{1\leq l \leq g_0-1} M^0_l$ into $(k+1)$ piles $A_0,A_1,...,A_k$ and $r$ remains. For each pile $A_i$, it has $g_1(\asymp_g g)$ components.

For $i=0,1,...,k$, we define a function $\varphi_i$ on $\mathcal{X}_g$ as follows.

Set $\varphi_i = 0$ on $M_j$ for those $M_j\cap A_i=\emptyset$.

On $M_{i,j}^0 \subset A_i$, set
\begin{equation}
 \varphi_i = \left\{
\begin{array}{ll}
 0 & \text{on $M_{i,1}^0 \cup M_{i,g_1}^0$}, \\
 1 & \text{on $M_{i,2}^0 \cup M_{i,g_1-1}^0$}, \\
 2 & \text{on $M_{i,3}^0 \cup M_{i,g_1-2}^0$}, \\
 3 & \text{on $M_{i,4}^0 \cup M_{i,g_1-3}^0$}, \\
\cdots \\
\frac{g_1}{2}-1 & \text{on $M_{i,g_1 /2}^0\cup M_{i,(g_1 /2+1)}^0$ if $g_1$ is even}, \\
\frac{g_1-1}{2} & \text{on $M_{i,(g_1+1) /2}^0$ if $g_1$ is odd}. \\
\end{array}\right. .
\end{equation}

Now we have defined $\varphi_i$ on each $M_j^0$. It remains to define $\varphi_i$ on the collars of these $g_0$ separating (thick) closed geodesics (as shown in figure \ref{pic6} or \ref{pic7}). We define $\varphi_i$ as shown in the following picture.

\begin{figure}[ht]
\begin{center}
\begin{tikzpicture}[scale=0.8]

\draw (0,0) +(0,1) ..controls+(-1,0)and+(0.8,-0.5).. +(-3,1.7)
+(0,1) ..controls+(1,0)and+(-0.8,-0.5).. +(3,1.7)
+(0,-1) ..controls+(-1,0)and+(0.8,0.5).. +(-3,-1.7)
+(0,-1) ..controls+(1,0)and+(-0.8,0.5).. +(3,-1.7);

\draw (-3,1.7)--(-3.8,2.2) (-3,-1.7)--(-3.8,-2.2) (3,1.7)--(3.8,2.2) (3,-1.7)--(3.8,-2.2);

\draw[red, very thick] (0,0) +(0,1) ..controls+(-0.2,-1).. +(0,-1);
\draw[red, very thick][dashed] (0,0) +(0,1) ..controls+(0.2,-1).. +(0,-1);

\draw (-3,0) +(0,1.7) ..controls+(-0.4,-1.7).. +(0,-1.7);
\draw[dashed] (-3,0) +(0,1.7) ..controls+(0.4,-1.7).. +(0,-1.7);

\draw (3,0) +(0,1.7) ..controls+(-0.4,-1.7).. +(0,-1.7);
\draw (3,0) +(0,1.7) ..controls+(0.4,-1.7).. +(0,-1.7);

\draw (-4.5,0) node{$M_l^0$};
\draw (4.5,0) node{$M_{l+1}^0$};

\draw (-4,-2.5) node {$\rho=$};
\draw (-3,-2.5) node {$-w$};
\draw (0,-2.5) node {$0$};
\draw (3,-2.5) node {$w$};

\draw (-4,-3.5) node {$\varphi_i=$};
\draw (-3,-3.5) node {$a$};
\draw (0,-3.5) node {$\frac{a+b}{2}$};
\draw (3,-3.5) node {$b$};

\draw[->] (-2,1.8)--(2,1.8);
\draw (0,2.1) node {$\rho$};

\draw[->] (-0.3,-0.9) ..controls+(-0.2,0.9).. (-0.3,0.9);
\draw (-0.7,0) node {$t$};

\end{tikzpicture}
\end{center}
\caption{} \label{pic8}
\end{figure}

For each collar, we use polar coordinate $(\rho,t)\in [-w,w] \times \mathbb S ^1$ where $w=w(\eps)=\mathop{\rm arcsinh} \frac{1}{\sinh \frac{1}{2}\eps}$ is the half width (see Lemma \ref{collar}). As figure \ref{pic8} shows, the two boundary cycles are parts of boundaries of two consecutive components $M_l^0$ and $M_{l+1}^0$ for some $0\leq l \leq g_0$. So $\varphi_i$ takes constant values $a$ and $b$ on each boundary loop respectively. We linearly extend $\varphi_i$ onto the collar in the following sense
$$\varphi_i(\rho,t) = \varphi_i(\rho) = \frac{a+b}{2} + \frac{b-a}{2\arctan(\tanh\frac{w}{2})}\arctan(\tanh\frac{\rho}{2})$$
for every $(\rho,t)\in [-w,w] \times \mathbb S ^1$.

In this way, for each $0\leq i \leq k$ we have defined a $H^1$ function $\varphi_i$ on $\mathcal{X}_g$. Now we make estimations of $\varphi_i$ on $\mathcal{X}_g$.

First by definition, for each $0\leq i \leq k$ we have
\bear
\int_{\mathcal{X}_g} \varphi_i^2 & > & \int_{\bigcup_{1\leq j \leq [\frac{g_1+1}{2}]} M_{i,j}^0} \varphi_i^2 \\
&=& \sum_{j=1}^{[\frac{g_1+1}{2}]} (j-1)^2 \cdot \area(M_{i,j}^0) \nonumber  \\
                       & = & \sum_{j=1}^{[\frac{g_1+1}{2}]} (j-1)^2\cdot (2\pi(V_0+2) - \frac{2\eps}{\sinh \frac{1}{2}\eps}) \nonumber
\eear
where we apply \eqref{are-c-k} in the last equation.

For all integer $n\geq 1$ the following elementary equation is well-known.
\[\sum_{j=1}^nj^2=\frac{n(n+1)(2n+1)}{6}\geq \frac{n^3}{3}.\]

Thus, for each $0\leq i \leq k$ we have
\bear \label{low-g^3}
\int_{\mathcal{X}_g} \varphi_i^2  >(2\pi(V_0+2) - \frac{2\eps}{\sinh \frac{1}{2}\eps})\cdot \frac{(g_1-2)^3}{24}.
\eear

On each collar where $\varphi_i$ is not a constant map, the function $\varphi_i$ takes values of difference equal to $1$ on the two boundaries curves of the collar. By the definition of $\varphi$ we have at most $(g_1-1)$ such collars (if $g_1$ is even, we only have $(g_1-2)$ such collars). We denote them by $K_l'$. So on each $K_l'$,
$$\varphi_i= c_l \pm \frac{1}{2\arctan(\tanh\frac{w}{2})}\arctan(\tanh\frac{\rho}{2})$$
for some constant $c_l$.

Recall the hyperbolic metric $ds^2=d\rho^2 + \eps^2 \cosh^2\rho dt^2$ on each collar (see Lemma \ref{collar}). For a smooth function $f(\rho,t)$ on each collar, the gradient $\nabla f$ of $f$ is
$$\nabla f= \frac{\partial f}{\partial \rho}\frac{\partial }{\partial \rho} + \frac{1}{l^2 \cosh^2\rho}\frac{\partial f}{\partial t}\frac{\partial }{\partial t}.$$

So on each collar the energy density of $\varphi_i$ is
$$|\nabla \varphi_i|^2(\rho,t)= |\frac{1}{2\arctan(\tanh\frac{w}{2})} \frac{1}{2\cosh\rho}|^2.$$

Recall that $\varphi_i$ is constant on $\mathcal{X}_g \setminus \cup_{l=1}^{g_1-1}K'_l$. In particular, $\nabla \varphi_i \equiv 0$ on $\mathcal{X}_g \setminus \cup_{l=1}^{g_1-1}K'_l$. Thus, for each $0\leq i \leq k$ we have,
\bear \label{upp-g}
\int_{\mathcal{X}_g} |\nabla\varphi_i|^2 & \leq & \sum_{l=1}^{g_1-1} \int_{K_l'} |\nabla(\frac{1}{2\arctan(\tanh\frac{w}{2})}\arctan(\tanh\frac{\rho}{2}))|^2 \\
                               & = & (g_1-1)\int_0^1\int_{-w}^w |\frac{1}{2\arctan(\tanh\frac{w}{2})} \frac{1}{2\cosh\rho}|^2 \eps \cosh\rho \ d\rho dt \nonumber \\
                               & = & (g_1-1)\frac{1}{(2\arctan(\tanh\frac{w}{2}))^2}\eps \arctan(\tanh\frac{w}{2}) \nonumber\\
                               & \leq & g_1\cdot \frac{\eps}{4\arctan(\tanh\frac{w}{2})} \nonumber
\eear

By \eqref{low-g^3} and \eqref{upp-g}, for each $0\leq i \leq k$ we have
\bear \label{upp-frac}
\frac{\int_{\mathcal{X}_g} \abs{\nabla \varphi_i}^2}{\int_{\mathcal{X}_g} \varphi_i^2} \leq \frac{24\eps}{4\arctan(\tanh\frac{w}{2}) \cdot (2\pi(V_0+2) - \frac{2\eps}{\sinh \frac{1}{2}\eps})}\cdot \frac{g_1}{(g_1-2)^3}.
\eear

By \eqref{ver-eq} and \eqref{g_0-eq} we know that
\[\lim_{g \to \infty} \frac{g_1}{g}=\frac{2}{(k+1)(2+V_0)}\]
where $V_0$ only depends on $\eps$. Actually the following inequality always holds:
\[\frac{g_1}{g}\geq \frac{1}{24V_0k}.\]

Thus, for each $0\leq i \leq k$ we have that for large enough $g$ ,
\bear \label{upp-i}
\frac{\int_{\mathcal{X}_g} \abs{\nabla \varphi_i}^2}{\int_{\mathcal{X}_g} \varphi_i^2} \leq \beta(\eps)\cdot \frac{k^2}{g^2}
\eear
where
\bear \label{beta-eq}
\beta(\eps)=\frac{10^{10}\eps V_0^2}{\arctan(\tanh\frac{w}{2}) \cdot (\pi(V_0+2) - \frac{\eps}{\sinh \frac{1}{2}\eps})}.
\eear

Recall that our construction of $\varphi_i$ ensures that the supports satisfy
\bear \label{empty}
supp(\varphi_i)\cap supp(\varphi_j) = \emptyset, \quad \forall 0\leq i\neq j \leq k.
\eear

In particular, these functions $\varphi_0,\varphi_1,...,\varphi_k$ are linearly independent. So
\[\dim(span\{\varphi_i\}_{0\leq i \leq k})=k+1.\]

One may choose a non-constant function
$$\varphi=a_0\varphi_0+...+a_k\varphi_k$$
for certain constants $a_i \ (0\leq i \leq k)$ such that
\[\int_{\mathcal{X}_g} \varphi \cdot f_l=0, \quad \forall l \in [0,k-1]\]
where $f_l$ is the normalized $l$-th eigenfunction of $\mathcal{X}_g$.

Recall equation \eqref{lamb-k} says that the $k$-th eigenvalue satisfies that
\[\lambda_k(\mathcal{X}_g)=\inf_{}\{ \frac{\int_{\mathcal{X}_g}|\nabla f|^2}{\int_{\mathcal{X}_g}f^2}; \ f \in H^1 \ \text{and} \ \int_{\mathcal{X}_g}f\cdot f_l=0 \ \forall l \in [0, k-1]\}.\]

Therefore, we have that for large enough $g$,
\bear
\lambda_k(\mathcal{X}_g) &\leq& \frac{\int_{\mathcal{X}_g} \abs{\nabla \varphi}^2}{\int_{\mathcal{X}_g} \varphi^2} \\
&=& \frac{\sum_{i=0}^ka_i^2\cdot \int_{\mathcal{X}_g} \abs{\nabla \varphi_i}^2}{\sum_{i=0}^k a_i^2\cdot{ \int_{\mathcal{X}_g} \varphi_i^2}} \quad \quad \text{(by \eqref{empty})}\nonumber \\
&\leq & \beta(\eps)\cdot \frac{k^2}{g^2}. \quad \quad \quad \quad \quad \quad \quad  (\text{by \eqref{upp-i}}) \nonumber
\eear
\
If the constant $\eps$ satisfies $$0<\eps< 2\arcsinh 1 = 2\ln (1+\sqrt 2),$$
then the hyperbolic surface $\mathcal{X}_g$ may be shown as in figure \ref{pic7}. For this case, the constant $d=d(\eps)>\eps$. As in Remark \ref{r-e-small} one may take $V_0=1$. So the constant $\beta(\eps)$ (see \eqref{beta-eq}) in Proposition \ref{upp} can be taken to be
\[\beta(\eps)=c\cdot \eps\]
where $c>0$ is a universal constant.

The proof is complete.
\ep

\bp [Proof of Theorem \ref{mt-1}]
It clearly follows by Proposition \ref{low} and \ref{upp}.
\ep

\begin{rem*}
By adding one more edge between the left subgraph $G(V_0)$ and the right subgraph $G(V_1)$ in $G_g$ as shown in Figure \ref{pic4}, we get a new trivalent graph denoted by $G'_{g+1}$. Since we have two more vertices, the new corresponding hyperbolic surface $\mathcal{X}'_{g+1}$ has genus of $(g+1)$. The proof of Proposition \ref{mt-2} can also give that $$ \mathcal{X}'_{g+1} \in \sM_{g+1}^{\geq \eps}.$$
Moreover, the proof of Theorem \ref{mt-1} also yields that for any integer $k>0$,
\[\lambda_k(\mathcal{X}'_g)\asymp_g \frac{1}{g^2}.\]

\begin{figure}[ht]
	\begin{center}
		\begin{tikzpicture}[scale=1]
		
		\draw(0.7,0.7)..controls(1,0.7)and(1.5,0.3)..(1.7,0.4)..controls(1.9,0.5)and(1.9,1)..(2,1.3);
		\draw(0.7,-0.7)..controls(1.3,-0.7)and(1.6,0)..(1.9,0.1)..controls(3.1,0.3)and(3.2,0.7)..(3.3,1);
		\draw(0.7,0.3)..controls(1.3,0.3)and(1.3,-0.2)..(0.7,-0.3);
		\draw(2.4,1.2)..controls(2.1,0.5)and(2.6,0.4)..(2.9,1.1);
		
		\draw(0.7,0.7)..controls(0.65,0.5)..(0.7,0.3);
		\draw(0.7,0.7)..controls(0.75,0.5)..(0.7,0.3);
		\draw(0.7,-0.7)..controls(0.65,-0.5)..(0.7,-0.3);
		\draw(0.7,-0.7)..controls(0.75,-0.5)..(0.7,-0.3);
		\draw[red,very thick] (1.7,0.4)..controls(1.73,0.2)..(1.9,0.1);
		\draw[red,very thick,dashed] (1.7,0.4)..controls(1.87,0.3)..(1.9,0.1);
		\draw(2,1.3)..controls(2.2,1.3)..(2.4,1.2);
		\draw(2,1.3)..controls(2.2,1.2)..(2.4,1.2);
		\draw(3.3,1)..controls(3.1,1.1)..(2.9,1.1);
		\draw(3.3,1)..controls(3.1,1)..(2.9,1.1);
		
		\draw(2.2,2.2)..controls(2.4,2.7)and(2.8,2.6)..(3,3);
		\draw(3.5,1.9)..controls(3.6,2.4)and(3.3,2.4)..(3.3,2.9);
		\draw(2.6,2.1)..controls(2.8,2.3)and(3.1,2.7)..(3.1,2);
		
		\draw(2.2,2.2)..controls(2.4,2.1)..(2.6,2.1);
		\draw(2.2,2.2)..controls(2.4,2.2)..(2.6,2.1);
		\draw(3.5,1.9)..controls(3.3,1.9)..(3.1,2);
		\draw(3.5,1.9)..controls(3.3,2)..(3.1,2);
		\draw[red,very thick] (3,3)..controls(3.2,3)..(3.3,2.9);
		\draw[red,very thick] (3,3)..controls(3.1,2.9)..(3.3,2.9);

		\draw(-0.7,0.7)..controls(-1,0.7)and(-1.5,0.3)..(-1.7,0.4)..controls(-1.9,0.5)and(-1.9,1)..(-2,1.3);
		\draw(-0.7,-0.7)..controls(-1.3,-0.7)and(-1.6,0)..(-1.9,0.1)..controls(-3.1,0.3)and(-3.2,0.7)..(-3.3,1);
		\draw(-0.7,0.3)..controls(-1.3,0.3)and(-1.3,-0.2)..(-0.7,-0.3);
		\draw(-2.4,1.2)..controls(-2.1,0.5)and(-2.6,0.4)..(-2.9,1.1);
		
		\draw(-0.7,0.7)..controls(-0.65,0.5)..(-0.7,0.3);
		\draw(-0.7,0.7)..controls(-0.75,0.5)..(-0.7,0.3);
		\draw(-0.7,-0.7)..controls(-0.65,-0.5)..(-0.7,-0.3);
		\draw(-0.7,-0.7)..controls(-0.75,-0.5)..(-0.7,-0.3);
		\draw[red,very thick,dashed] (-1.7,0.4)..controls(-1.73,0.2)..(-1.9,0.1);
		\draw[red,very thick] (-1.7,0.4)..controls(-1.87,0.3)..(-1.9,0.1);
		\draw(-2,1.3)..controls(-2.2,1.3)..(-2.4,1.2);
		\draw(-2,1.3)..controls(-2.2,1.2)..(-2.4,1.2);
		\draw(-3.3,1)..controls(-3.1,1.1)..(-2.9,1.1);
		\draw(-3.3,1)..controls(-3.1,1)..(-2.9,1.1);
			
		\draw(-2.2,2.2)..controls(-2.4,2.7)and(-2.8,2.6)..(-3,3);
		\draw(-3.5,1.9)..controls(-3.6,2.4)and(-3.3,2.4)..(-3.3,2.9);
		\draw(-2.6,2.1)..controls(-2.8,2.3)and(-3.1,2.7)..(-3.1,2);
		
		\draw(-2.2,2.2)..controls(-2.4,2.1)..(-2.6,2.1);
		\draw(-2.2,2.2)..controls(-2.4,2.2)..(-2.6,2.1);
		\draw(-3.5,1.9)..controls(-3.3,1.9)..(-3.1,2);
		\draw(-3.5,1.9)..controls(-3.3,2)..(-3.1,2);
		\draw[red,very thick] (-3,3)..controls(-3.2,3)..(-3.3,2.9);
		\draw[red,very thick] (-3,3)..controls(-3.1,2.9)..(-3.3,2.9);

		\draw[dashed](-0.6,0.5)..controls(0,0.7)..(0.6,0.5);
		\draw[dashed](-0.6,-0.5)..controls(0,-0.7)..(0.6,-0.5);
		\draw[dashed](2.2,1.3)..controls(2.2,1.7)..(2.4,2.1);
		\draw[dashed](3.1,1.1)..controls(3.3,1.5)..(3.3,1.9);
		\draw[dashed](-2.2,1.3)..controls(-2.2,1.7)..(-2.4,2.1);
		\draw[dashed](-3.1,1.1)..controls(-3.3,1.5)..(-3.3,1.9);

		\draw(0.7,5.3)..controls(1,5.3)and(1.5,5.7)..(1.7,5.6);
		\draw(0.7,6.7)..controls(1.3,6.7)and(1.6,6)..(1.9,5.9);
		\draw(0.7,5.7)..controls(1.3,5.7)and(1.3,6.2)..(0.7,6.3);
		
		\draw(0.7,5.3)..controls(0.65,5.5)..(0.7,5.7);
		\draw(0.7,5.3)..controls(0.75,5.5)..(0.7,5.7);
		\draw(0.7,6.7)..controls(0.65,6.5)..(0.7,6.3);
		\draw(0.7,6.7)..controls(0.75,6.5)..(0.7,6.3);
		\draw[red,very thick] (1.7,5.6)..controls(1.73,5.8)..(1.9,5.9);
		\draw[red,very thick] (1.7,5.6)..controls(1.87,5.7)..(1.9,5.9);
		
		\draw(-0.7,5.3)..controls(-1,5.3)and(-1.5,5.7)..(-1.7,5.6);
		\draw(-0.7,6.7)..controls(-1.3,6.7)and(-1.6,6)..(-1.9,5.9);
		\draw(-0.7,5.7)..controls(-1.3,5.7)and(-1.3,6.2)..(-0.7,6.3);
		
		\draw(-0.7,5.3)..controls(-0.65,5.5)..(-0.7,5.7);
		\draw(-0.7,5.3)..controls(-0.75,5.5)..(-0.7,5.7);
		\draw(-0.7,6.7)..controls(-0.65,6.5)..(-0.7,6.3);
		\draw(-0.7,6.7)..controls(-0.75,6.5)..(-0.7,6.3);
		\draw[red,very thick] (-1.7,5.6)..controls(-1.73,5.8)..(-1.9,5.9);
		\draw[red,very thick] (-1.7,5.6)..controls(-1.87,5.7)..(-1.9,5.9);
		
		\draw[dashed](-0.6,5.5)..controls(0,5.3)..(0.6,5.5);
		\draw[dashed](-0.6,6.5)..controls(0,6.7)..(0.6,6.5);

		\draw[dashed](1.9,5.75)..controls(2.9,4.7)..(3.15,3);
		\draw[dashed](-1.9,5.75)..controls(-2.9,4.7)..(-3.15,3);
		
		\draw (0,0) node {$\Sigma(G(V_1))$};
		\draw (0,6) node {$\Sigma(G(V_0))$};
		\draw (2.7,1.6) node {$\Sigma(G(V_0))$};
		\draw (-2.8,1.6) node {$\Sigma(G(V_0))$};
		\end{tikzpicture}
	\end{center}
	\caption{surface $\mathcal{X}'_{g+1}$} \label{pic9}
\end{figure}
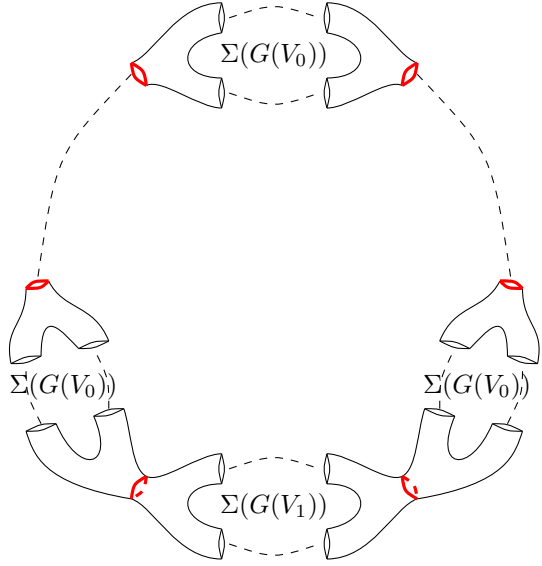
\noindent In this case, we remark that the surface $\mathcal{X}'_g$ does not satisfy that \textsl{the number of separating systolic curves on $\mathcal{X}'_g$ is uniformly comparable to the genus $g$ of $\mathcal{X}'_g$.} Actually in the proof of Proposition \ref{upp}, the essential point is existence of a collection of systolic curves, which has number uniformly comparable to the genus $g$ and can be ordered from left to right in some sense.
\end{rem*}

\begin{rem*}
The reason for choosing the function $\arctan(\tanh\frac{\rho}{2})$ to extend $\varphi_i$ on collars is that this makes $\int_{\text{one collar}} |\nabla\varphi_i|^2$ to be minimal (by fixing constant boundary values). For the proof of Proposition \ref{upp}, this choice is not essential. One may also choose some other function $\varphi_i$ to make $\int_{\text{collar}} |\nabla\varphi_i|^2$ to be comparable to $1$. It will also yield the uniform upper bound in Theorem \ref{mt-1}.

The following lemma may tell that why the function $\arctan(\tanh\frac{\rho}{2})$ makes the energy $\int_{\text{one collar}} |\nabla\varphi_i|^2$ to be minimal.
\begin{lemma}\label{f(collar)}
For a collar $K$ with center geodesic length $\ell$ and half width $w$ and using the coordinate $(\rho,t)$ defined in Lemma \ref{collar}, we have for two given constants $a$ and $b$,
\begin{equation}\label{min(int|nabla f|^2)}
\min \int_K|\nabla f|^2 = (a-b)^2 \frac{l}{4\arctan (\tanh \frac{w}{2})}
\end{equation}
where the minimum is taken over all functions $f(\rho,t)$ with $$f|_{\{\rho=-w\}}=a\ \text{and} \ f|_{\{\rho=w\}}=b.$$

The equality in \eqref{min(int|nabla f|^2)} holds if and only if
\begin{equation}\label{defn f(collar)}
f(\rho,t) = \frac{a+b}{2} + \frac{b-a}{2\arctan(\tanh\frac{w}{2})}\arctan(\tanh\frac{\rho}{2}).
\end{equation}
\end{lemma}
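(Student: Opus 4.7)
The plan is to set up the energy in the collar coordinates, reduce to a one-dimensional problem by averaging in $t$, and then apply Cauchy--Schwarz (equivalently, solve the Euler--Lagrange equation) to obtain both the minimum value and the explicit minimizer.

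First I would write out the energy in coordinates. Using the metric $ds^2=d\rho^2+\ell^2\cosh^2\rho\,dt^2$ and the volume form $\ell\cosh\rho\,d\rho\,dt$, we have
\begin{equation*}
\int_K|\nabla f|^2=\int_0^1\!\!\int_{-w}^{w}\Bigl(|f_\rho|^2+\tfrac{1}{\ell^2\cosh^2\rho}|f_t|^2\Bigr)\,\ell\cosh\rho\,d\rho\,dt.
\end{equation*}
Since the boundary values $a,b$ are constants along each boundary circle, the averaged function $\bar f(\rho):=\int_0^1 f(\rho,t)\,dt$ still satisfies $\bar f(-w)=a$ and $\bar f(w)=b$. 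Jensen's inequality (applied pointwise in $\rho$) gives $|\bar f'(\rho)|^2\leq \int_0^1|f_\rho(\rho,t)|^2\,dt$, while the $|f_t|^2$ term contributes nonnegatively; hence $\int_K|\nabla\bar f|^2\leq\int_K|\nabla f|^2$. So the minimum is achieved among $t$-independent functions, and the problem reduces to minimizing $\ell\int_{-w}^{w}|f'(\rho)|^2\cosh\rho\,d\rho$ with $f(\pm w)=a,b$.

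Next I would apply the Cauchy--Schwarz inequality,
\begin{equation*}
(b-a)^2=\Bigl(\int_{-w}^{w}f'(\rho)\sqrt{\cosh\rho}\cdot\tfrac{1}{\sqrt{\cosh\rho}}\,d\rho\Bigr)^2\le \Bigl(\int_{-w}^{w}|f'|^2\cosh\rho\,d\rho\Bigr)\Bigl(\int_{-w}^{w}\tfrac{d\rho}{\cosh\rho}\Bigr).
\end{equation*}
The second factor is evaluated via the half-angle substitution $u=\tanh(\rho/2)$, under which $d\rho/\cosh\rho=2\,du/(1+u^2)$; thus
\begin{equation*}
\int_{-w}^{w}\frac{d\rho}{\cosh\rho}=4\arctan\bigl(\tanh\tfrac{w}{2}\bigr).
\end{equation*}
Multiplying by $\ell$ yields the claimed lower bound $\int_K|\nabla f|^2\geq(a-b)^2\cdot\frac{\ell}{4\arctan(\tanh\frac{w}{2})}$.

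Finally, equality in Cauchy--Schwarz forces $f'(\rho)\sqrt{\cosh\rho}=c/\sqrt{\cosh\rho}$ for some constant $c$, i.e. $\cosh\rho\cdot f'(\rho)$ is constant (this is also the Euler--Lagrange equation for the reduced problem). Integrating using the same substitution gives $f(\rho)=C_1+C_2\arctan(\tanh\tfrac{\rho}{2})$, and matching the boundary conditions $f(-w)=a$, $f(w)=b$ (using that $\arctan(\tanh(\cdot/2))$ is odd) produces exactly formula \eqref{defn f(collar)}. The combination of the two steps gives both the minimum value and its unique attainer, completing the proof. I do not expect any real obstacle here; the only substantive ingredient is the evaluation $\int d\rho/\cosh\rho=2\arctan(\tanh(\rho/2))+C$, and the rest is a clean Cauchy--Schwarz/variational argument.
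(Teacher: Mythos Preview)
Your proof is correct. The paper's argument is close in spirit but organized differently: it fixes $t$, takes the first variation of $A_t(f)=\int_{-w}^{w}\cosh\rho\,|f_\rho|^2\,d\rho$ to derive the Euler--Lagrange equation $\partial_\rho(\cosh\rho\,f_\rho)=0$, solves this ODE to obtain the minimizer \eqref{defn f(collar)}, and then evaluates the energy; the $t$-dependence is handled at the end by observing that the $|f_t|^2$ term is nonnegative. Your route replaces the variational step by a direct Cauchy--Schwarz inequality (which immediately yields both the sharp lower bound and, via its equality case, the same condition $\cosh\rho\,f'=\text{const}$), and you dispose of the $t$-dependence up front via averaging and Jensen. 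Both arguments compute $\int_{-w}^{w}d\rho/\cosh\rho=4\arctan(\tanh(w/2))$ as the key identity. Your approach is slightly more self-contained in that the minimum value appears before the minimizer is written down, whereas the paper's first-variation argument makes the ODE structure of the problem more explicit; substantively they are two presentations of the same one-dimensional optimization.
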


\begin{proof}
We use variation method.

Recall that the hyperbolic metric on the collar is
$$ds^2 = d\rho^2 + l^2\cosh^2 \rho dt^2.$$
So for any function $f$ we have
$$\nabla f = \frac{\partial f}{\partial \rho}\frac{\partial}{\partial \rho} + \frac{1}{l^2\cosh^2\rho}\frac{\partial f}{\partial t}\frac{\partial f}{\partial t},$$
$$\abs{\nabla f}^2 = (\frac{\partial f}{\partial \rho})^2 + \frac{1}{l^2 \cosh^2 \rho}(\frac{\partial f}{\partial t})^2,$$
$$\int_K \abs{\nabla f}^2 = \int_0^1 \int_{-w}^w l\cosh \rho\  (\frac{\partial f}{\partial \rho})^2 + \frac{1}{l \cosh \rho}(\frac{\partial f}{\partial t})^2 d\rho dt.$$

For fixed $t$, let
$$A_t(f) := \int_{-w}^w \cosh \rho\  (\frac{\partial f}{\partial \rho})^2 d\rho.$$

Let $h(\rho)$ be an arbitrary smooth function on $[-w,w]$ such that $h(-w)=h(w)=0$. Then the first derivative is
\begin{eqnarray*}
\frac{d}{ds}|_{s=0} (\int_{-w}^w \cosh \rho\  (\frac{\partial (f+sh)}{\partial \rho})^2 d\rho)
& = & \int_{-w}^w \cosh \rho\  2\frac{\partial f}{\partial \rho}\frac{\partial h}{\partial \rho} d\rho \\
& = & -2\int_{-w}^w \frac{\partial}{\partial \rho}(\cosh \rho\  \frac{\partial f}{\partial \rho}) h d\rho
\end{eqnarray*}

Since $h$ is arbitrary, $A_t(f)$ takes the minimum only if
$$\frac{\partial}{\partial \rho}(\cosh \rho\  \frac{\partial f}{\partial \rho}) = 0.$$

Since $f|_{\{\rho=-w\}}=a$ and $f|_{\{\rho=w\}}=b$, by elementary ODE theory we know that
$$f(\rho) = \frac{a+b}{2} + \frac{b-a}{2\arctan(\tanh\frac{w}{2})}\arctan(\tanh\frac{\rho}{2}).$$

Thus, for any $f$ we have
\begin{eqnarray*}
A_t(f)
& = & \int_{-w}^w \cosh \rho\  (\frac{\partial f}{\partial \rho})^2 d\rho \\
& \geq & \int_{-w}^w \cosh \rho\  (\frac{\partial (\frac{a+b}{2} + \frac{b-a}{2\arctan(\tanh\frac{w}{2})}\arctan(\tanh\frac{\rho}{2}))}{\partial \rho})^2 d\rho \\
& = & \int_{-w}^w (\frac{b-a}{4\arctan(\tanh\frac{w}{2})})^2 \frac{1}{\cosh \rho} d\rho \\
& = & \frac{(b-a)^2}{4\arctan(\tanh\frac{w}{2})}.
\end{eqnarray*}

Therefore, for any $f$ we have
\begin{eqnarray*}
\int_K \abs{\nabla f}^2
& = & \int_0^1 l A_t(f) dt + \int_0^1 \int_{-w}^w \frac{1}{l \cosh \rho}(\frac{\partial f}{\partial t})^2 d\rho dt \\
& \geq & \int_0^1 l A_t(f) dt \\
& \geq & \frac{(b-a)^2l}{4\arctan(\tanh\frac{w}{2})}.
\end{eqnarray*}
It is clear that the equality holds if and only if
\begin{equation*}
f(\rho,t) = \frac{a+b}{2} + \frac{b-a}{2\arctan(\tanh\frac{w}{2})}\arctan(\tanh\frac{\rho}{2}).
\end{equation*}
Which completes the proof.
\end{proof}
\end{rem*}

Now we start to prove Theorem \ref{mt-large}.

\begin{proof} [Proof of Theorem \ref{mt-large}]
Let $\mathcal{X}_g \in \sM_g^{\geq \eps}$ be a hyperbolic surface in Proposition \ref{mt-2} which is constructed in Section \ref{sec-ex}. By construction, the (bold) closed geodesics in figure \ref{pic6} and \ref{pic7} are separating and can be sorted from left to right. Each of these separating geodesics has length $\eps$ and hence admits an embedded collar with half width 
$$\omega(\eps)=\mathop{\rm arcsinh} \frac{1}{\sinh \frac{1}{2}\eps}.$$
Clearly the number of these separating geodesics is at least $c(\eps)\cdot(g-1)$ for some constant $c(\eps)>0$ only depending on $\eps$. Here if $\eps<2\mathop{\rm arcsinh}1$ (see figure \ref{pic7}), then $c(\eps)=1$ and the number is exactly $(g-1)$. Hence we have that the surface $\mathcal{X}_g$ has diameter 
$$\mathrm{diam}(\mathcal{X}_g) \geq 2\omega(\eps) c(\eps)  (g-1).$$
	
\noindent Recall that by Cheng \cite[Corollary 2.3]{Che75} we have that for all $k> 0$, 
$$\lambda_k(\mathcal{X}_g) \leq \frac{1}{4} + \frac{16\pi^2k^2}{\mathrm{diam}^2(\mathcal{X}_g)}.$$ 
Now we take $k=ag$, then 
$$\min\limits_{X\in \M_g^{\geq \eps}} \lambda_{ag}(X) \leq \lambda_{ag}(\mathcal{X}_g) \leq \frac{1}{4} + a^2 \cdot \frac{5\pi^2}{\omega(\eps)^2 c(\eps)^2}.$$
Then the upper bound follows by choosing
\[\theta(\eps)=  \frac{5\pi^2}{\omega(\eps)^2 c(\eps)^2}.\] 
For small $\eps>0$, $c(\eps)=1$ and $\omega(\eps)$ is large. So clearly we have $\theta(\eps)\to 0$ as $\eps\to 0$.
\end{proof}

\bibliographystyle{plain}
\bibliography{ref}

\end{document}